\documentclass[12pt]{amsart}
\RequirePackage[utf8]{inputenc}
\usepackage{amsfonts,amssymb,amsmath,amscd,amsthm,amsbsy,hyperref,tikz, appendix}
\usepackage{blkarray}
% mathabx,latexsym,epsfig
\usepackage{setspace}
\usepackage{fix-cm}
\usepackage[paper=a4paper, dvips, top=2.5cm, left=1.8cm, right=1.8cm, foot=1.3cm, bottom=3.2cm]{geometry}
\usepackage{marginnote,letltxmacro}

\LetLtxMacro\mn\marginnote
\usepackage{enumerate}
\tikzset{node distance=2cm, auto}
% Title Page
\title{Diagonalizing the Ricci tensor}
\author{Anusha M. Krishnan}

% \address{\begin{tabular}{l}
% 215 Carnegie Building\\ Department of Mathematics\\ Syracuse University\\
% Syracuse, NY 13244, USA \\[0.2cm]
% \emph{E-mail address}: {\tt akrish03@syr.edu}
% \end{tabular}
% }

\address{\begin{tabular}{l}
215 Carnegie Building, Department of Mathematics, Syracuse University, Syracuse,\\ NY 13244, USA \\[0.2cm]
\emph{E-mail address}: {\tt akrish03@syr.edu}
\end{tabular}
}

\date{\today}

\newcommand\numberthis{\addtocounter{equation}{1}\tag{\theequation}}
\newcommand{\g}{\mathrm{g}}

\newcommand{\R}{\mathbb{R}}
\newcommand{\C}{\mathbb{C}}
\newcommand{\Z}{\mathbb{Z}}

\newcommand{\B}{\mathcal{B}}

\renewcommand{\gg}{\mathfrak{g}}

\renewcommand{\k}{\mathfrak{k}}
\newcommand{\p}{\mathfrak{p}}
\newcommand{\h}{\mathfrak{h}}

\newcommand{\m}{\mathfrak{m}}
\newcommand{\n}{\mathfrak{n}}

\newcommand{\ddr}{\frac{\partial}{\partial r}}

\newcommand{\G}{\mathsf{G}}
\newcommand{\K}{\mathsf{K}}
\renewcommand{\H}{\mathsf{H}}

\newcommand{\SU}{\mathsf{SU}}
\newcommand{\SO}{\mathsf{SO}}
\renewcommand{\O}{\mathsf O}

\newcommand{\dd}{\mathrm{d}}
\newcommand{\ggz}{\g_{\rm GZ}}
\newcommand{\vspan}{\operatorname{span}}
\newcommand{\diag}{\operatorname{diag}}
\newcommand{\Span}{\operatorname{span}}
\newcommand{\Ric}{\operatorname{Ric}}
\newcommand{\Ad}{\operatorname{Ad}}
\newcommand{\ad}{\operatorname{ad}}

\newtheorem{theorem}{Theorem}

\newtheorem{proposition}[theorem]{Proposition}
\newtheorem{remark}[theorem]{Remark}
\newtheorem{mainthm}{\sc Theorem}

\newtheorem{maincor}[mainthm]{\sc Corollary}
\newtheorem*{mainconj}{\sc Conjecture}

\newtheorem*{maindefn}{\sc Definition}
\newtheorem{example}[theorem]{Example}

\numberwithin{equation}{section}
\numberwithin{theorem}{section}

\sloppy
\begin{document}
\begin{abstract}
We show that a basis of a semisimple Lie algebra of compact type, for which any diagonal left-invariant metric has a diagonal Ricci tensor, is characterized by the Lie algebraic condition of being ``nice''. Namely, the bracket of any two basis elements is a multiple of another basis element. This extends the work of Lauret and Will \cite{lw13} on nilpotent Lie algebras. The result follows from a more general characterization for diagonalizing the Ricci tensor for homogeneous spaces. Finally, we also study the Ricci flow behavior of diagonal metrics on cohomogeneity one manifolds.
\end{abstract}
\maketitle
\reversemarginpar
% \doublespacing
\section{Introduction}
Given a Riemannian manifold $(M, \g)$, its Ricci tensor $\Ric_\g$ is also a symmetric 2-tensor, and hence \emph{locally} (i.e. at each point) there is a basis that diagonalizes $\Ric_\g$ at that point. In this article we study the problem of diagonalizing $\Ric_\g$ \emph{globally}, in a particular sense that we will describe below. Diagonalizing the Ricci tensor is helpful in studying the Einstein equation (see \cite{da09}), the prescribed Ricci curvature equation, and the Ricci flow on a homogeneous space or on a cohomogeneity one manifold.

In particular, we are interested in diagonal metrics on closed cohomogeneity one manifolds under the Ricci flow. The Ricci flow is the geometric PDE
\begin{equation}\label{eq:RF}
  \begin{aligned}
  \frac{\dd \g}{\dd t} &= -2\Ric_\g,\,\,\,  \g(0) &= \g_0
 \end{aligned}
\end{equation}
for evolving in time a given Riemannian metric $\g_0$ on a manifold $M$. A cohomogeneity one manifold $M$ is a manifold with an action by a Lie group $\G$ so that the generic orbit of the group action has codimension $1$. By a diagonal metric on a cohomogeneity one manifold, we mean a metric that is diagonal with respect to a basis consisting of Killing vector fields of the action of $\G$ along a geodesic orthogonal to all orbits. See \cite{ak04}, \cite{aik15}, \cite{iks16} and \cite{bk16} where the Ricci flow on compact cohomogeneity one manifolds is used (sometimes implicitly) to study questions about singularity formation and curvature evolution under the flow.

As we will see, Ricci-diagonality for a cohomogeneity one manifold is equivalent to Ricci-diagonality for a principal orbit, which is a homogeneous space $\G\cdot p \cong \G/\H$. Therefore, initially we focus our attention on the case where $M$ is a Lie group or a homogeneous space with an invariant metric. In those cases, the metric on $M$ is completely determined by the inner product on a single tangent space. For a Lie group $\G$, the tangent space at the identity element, $T_e \G$, can be identified with the Lie algebra $\gg$, and we make the following definition.

\begin{maindefn}
 A basis $\B$ for a Lie algebra $\gg$ is said to be \emph{stably Ricci-diagonal} if any diagonal left-invariant metric has diagonal Ricci tensor $\Ric_\g$. 
\end{maindefn}

We can similarly define stably Ricci-diagonal bases corresponding to a homogeneous space $\G/\H$. Which bases of a Lie algebra $\gg$ are stably Ricci-diagonal? The question has been answered for nilpotent Lie algebras: Lauret and Will proved \cite{lw13} that stably Ricci-diagonal bases are characterized by the Lie algebraic condition of being \textit{nice}. For the discussion in the present article, we redefine it as follows.
\begin{maindefn}
 A basis $\B = \{X_1, \cdots, X_n\}$ for a Lie algebra $\mathfrak{g}$ is said to be \emph{nice} if $[X_i , X_j]$ is always a scalar multiple of some element in the basis.
\end{maindefn}
Under the assumptions of our article this is equivalent to the definition in \cite{lw13} (see Remark \ref{rem:nice_lw}). The paper \cite{lw13} also provides examples of solvable Lie algebras for which \textit{stably Ricci-diagonal} and \textit{nice} are not equivalent. In this article we show that the two conditions are equivalent for semisimple Lie algebras of compact type (hence, for left-invariant metrics on compact Lie groups).

In fact, we prove a more general result for a compact homogeneous space $\G/\H$. The result for compact Lie groups will follow as a particular case. We now provide the notation needed to state this result. Let $Q$ be a bi-invariant metric on $\gg$. Let $\h\subset\gg$ be the Lie algebra of $\H$ and let $\h^\perp$ be a $Q$-orthogonal complement of $\h$ in $\gg$. The tangent space at $[\H]$, $T_{[\H]}(\G/\H)$, can be identified with $\h^\perp$. Under the adjoint action of $\H$ on $\n = \h^\perp$, we have the orthogonal decomposition into irreducible $\H$-modules $\n = \n_1 \oplus \cdots \oplus \n_l$. Let $\B= \{e_l\}_l$ be a $Q$-orthonormal basis for $\gg$ that respects this decomposition. The Lie algebra structure constants $\gamma_{ij}^k$ are defined via $[e_i, e_j] = \displaystyle\sum_k \gamma_{ij}^k e_k$, i.e. $\gamma_{ij}^k = Q([e_i, e_j], e_k)$. With this notation in hand, we make the following definition.
\begin{maindefn}
 A basis $\B = \{X_1, \cdots, X_n\}$ for $\mathfrak{h}^\perp$ is said to be \emph{nice} if $\displaystyle\sum_{\substack{e_\alpha \in \n_r\\ e_\beta \in \n_s}} \gamma_{\alpha\beta}^i\gamma_{\alpha\beta}^j = 0$ for all $r$, $s$, $i\neq j$ and $e_i \in \n_i$, $e_j \in \n_j$, where $\n_i$ and $\n_j$ are modules equivalent under the action of $\Ad(\H)$.
\end{maindefn}
In the case where the homogeneous space is simply a Lie group $\G$, the modules are one-dimensional vector spaces, all equivalent under the (trivial) action of $\H$, and each spanned by a single basis element. Therefore in the case of a Lie group, the above condition reduces to:
\begin{align*}
 \gamma_{rs}^i\gamma_{rs}^j = 0 \mbox{ for all $r,s$, and all $i\neq j$.}
\end{align*}
In other words, for any pair of indices $r$ and $s$, there is at most one index $k$ for which $\gamma_{rs}^k \neq 0$, so $[e_r, e_s]$ (if non-zero) is a multiple of a single basis element. Thus, this matches with the definition of nice basis for a Lie algebra $\mathfrak{g}$, stated earlier in this article.

Our first result characterizes stably Ricci-diagonal bases for a compact homogeneous space:
\begin{mainthm}\label{mainthm:sRd_homogeneous}
 Let $\G/\H$ be a compact homogeneous space, $Q$ a bi-invariant metric for $\G$, and $\B$ a $Q$-orthonormal basis for $\n = \h^\perp$. Then $\B$ is stably Ricci-diagonal if and only if $\B$ is a nice basis.
\end{mainthm}
We will also see that the equations only depend on $i, j$, i.e., are independent of the choice of basis elements $e_i\in \n_i$, $e_j\in \n_j$. As an immediate corollary, we have the following result which directly extends the work of \cite{lw13} to the compact setting:
\begin{maincor}\label{mainthm:nice_stably_Ric_diag}
 Let $\G$ be a compact Lie group with bi-invariant metric $Q$. Suppose $\B = \{e_i\}$ is a $Q$-orthonormal basis for $\gg$. Then $\B$ is stably Ricci-diagonal for left-invariant metrics on $\G$ if and only if $\B$ is a nice basis.
\end{maincor}

\begin{center}
 ***
\end{center}

In the second part of this article, we focus on cohomogeneity one manifolds. We are interested in the question of whether an invariant diagonal metric on a closed cohomogeneity one manifold ($M$, $\G$) remains diagonal (in the same basis) when evolved by the Ricci flow. When the answer is affirmative, it implies that there is a \emph{time-independent} frame that diagonalizes the metric restriction on each orbit. The time-independence is not guaranteed otherwise, isometry preservation notwithstanding. Preservation of diagonality has another important geometric consequence, namely that any curve transverse to all orbits which is a geodesic in the initial metric $\g_0$, will remain a geodesic (up to reparametrization) in the evolving metric $\g(t)$. In \cite{bk16}, a crucial step was to show that for the specific cohomogeneity one manifolds under consideration in that paper, diagonality of the metric is preserved under the Ricci flow.

The answer to this question a priori depends on the choice of basis used to describe the metric. As indicated earlier, we are considering a basis $\B$ of Killing vector fields along a minimal geodesic (denoted by $\gamma(r)$) orthogonal to the orbits of the action of $\G$, which parameterizes the orbit space $M/\G$. For an affirmative answer to the above question, at a minimum, the basis $\B' = \B \cup \{\ddr\}$ must be \textit{stably Ricci-diagonal}, meaning that every metric that is diagonal in this basis, also has a Ricci tensor that is diagonal in the same basis along $\gamma$. Otherwise the Ricci flow equation implies that the metric acquires off-diagonal terms to first order in time. It is worth noting that a basis $\B' = \B \cup \{\ddr\}$ for a cohomogeneity one manifold is stably Ricci-diagonal if and only if $\B$ is a stably Ricci-diagonal basis for the principal orbit $\G/\H$ (see Proposition \ref{propn:ric_homog}).

However, even if the basis is known to be stably Ricci-diagonal, it is not clear that the flow preserves diagonality of the metric since we cannot rule out the possibility of the metric acquiring off-diagonal components at a slower rate. Nevertheless, it seems natural to make the following conjecture:
\begin{mainconj}
 Let $\B'$ be a stably Ricci-diagonal basis for the cohomogeneity one manifold $(M, \G)$, and let $\g_0$ be a metric on $M$ that is diagonal with respect to $\B'$. Then the Ricci flow evolving metric $\g(t)$ is also diagonal in the basis $\B'$.
\end{mainconj}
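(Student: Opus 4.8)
\emph{Proof proposal.} The natural strategy is to recast \eqref{eq:RF} as a parabolic system for the orbit metrics along the normal geodesic, to observe that the ``diagonal locus'' is preserved to \emph{all} orders by virtue of the stably-Ricci-diagonal hypothesis, and then to conclude by a uniqueness argument. Fix a point $p$ on a principal orbit, let $\gamma$ be the unit-speed geodesic through $p$ orthogonal to all orbits, and trivialize $TM$ along $\gamma$ using the Killing fields in $\B$ together with $\gamma'$. A $\G$-invariant metric that is diagonal with respect to $\B$ is then described by positive functions $f_i(r)$ (the diagonal entries), and the Ricci flow becomes a system of PDEs in $(r,t)$ for the $f_i$. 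The key point is that \emph{all} the terms in the cohomogeneity-one Ricci tensor --- the intrinsic Ricci tensor of the orbit $\G/\H$, the shape operator $L$ of the orbit, its radial derivative, and the algebraic terms built from $L$ --- are functions of the orbit metric and its $r$-derivatives. Hence if the orbit metric is diagonal for every $r$, so are $L$, $\partial_r L$, etc., and the intrinsic term is diagonal because $\B$ is stably Ricci-diagonal, using Proposition~\ref{mainpropn:sRd_homogeneous} together with the equivalence between Ricci-diagonality of the cohomogeneity one manifold and of a principal orbit. Consequently the off-diagonal components $g_{ij}$, $i\neq j$, satisfy a subsystem for which $g_{ij}\equiv 0$ is an exact solution: the ``diagonal Ricci flow'' is genuinely the restriction of \eqref{eq:RF} to the linear subspace of diagonal metrics.

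With this in hand, the plan is: (i) solve the diagonal system with initial datum $\g_0$, obtaining a $\G$-invariant family $\tilde{\g}(t)$ which is diagonal with respect to $\B$ for all $t$; (ii) observe, by the previous paragraph, that $-2\Ric_{\tilde{\g}(t)} = \partial_t \tilde{\g}(t)$ holds as an identity of symmetric $2$-tensors, so $\tilde{\g}(t)$ solves the full Ricci flow with $\tilde{\g}(0)=\g_0$; and (iii) invoke uniqueness of the Ricci flow on the closed manifold $M$ to conclude $\g(t)=\tilde{\g}(t)$, which is diagonal. Step (iii) is cleanest if carried out at the level of the Ricci--DeTurck flow with a fixed $\G$-invariant background metric $\og$ that is itself diagonal with respect to $\B$: the Ricci--DeTurck operator then maps diagonal metrics to diagonal metrics, the system is strictly parabolic, and one transfers the conclusion back to \eqref{eq:RF} via the DeTurck diffeomorphisms. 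In the special case where $M$ is itself a closed homogeneous space --- or, equivalently, when one works orbit by orbit and the flow is an ODE --- this whole scheme collapses to the elementary fact that a vector field that is tangent along a linear subspace leaves that subspace invariant.

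The main obstacle is the behavior at the singular orbits. There the functions $f_i$ degenerate, the diagonal system loses parabolicity, and one must show that its solution remains a genuine smooth $\G$-invariant metric on all of $M$; that is, that the smoothness conditions at the singular orbits (of Eschenburg--Wang type) are preserved by the diagonal flow, and that short-time existence holds in this degenerate setting. A secondary technical point is to check that the DeTurck vector field associated to a pair of diagonal $\G$-invariant metrics is purely radial --- equivalently, that it has no component along $\Ad(\H)$-invariant directions of $\n$ --- so that the DeTurck diffeomorphisms are reparametrizations of $\gamma$ carrying orbits to orbits and therefore preserve diagonality; this is automatic when $\n$ contains no trivial $\Ad(\H)$-submodule but needs separate attention otherwise.

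It is precisely these degenerate-PDE issues, rather than the scenario of off-diagonal terms growing at a slower rate --- which the uniqueness argument above rules out once a clean parabolic framework is available --- that prevent this outline from being an unconditional proof, and this is why the statement is recorded here only as a conjecture.
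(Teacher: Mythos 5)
Your proposal is an honest outline rather than a proof, and you correctly diagnose this yourself at the end. The paper likewise does not prove this statement --- it is recorded explicitly as a conjecture --- and the obstruction you name (a degenerate parabolic system for the $f_i$ at the singular orbits, together with the smoothness/compatibility conditions there) is precisely the one the author cites: ``an affirmative answer is equivalent to the existence of solutions to a degenerate parabolic system of coupled PDEs \dots with overdetermined boundary conditions.'' So your diagnosis of where the difficulty lies is accurate. One small caution on wording: the claim that the diagonal locus is ``preserved to all orders by virtue of the stably-Ricci-diagonal hypothesis'' should not be read as settling the matter --- it says only that the diagonal equations form a formally consistent subsystem, which is exactly what the paper concedes before warning that off-diagonal components could still appear ``at a slower rate.'' Your steps (i)--(iii) correctly reduce everything to short-time existence for the diagonal subsystem with the degenerate boundary behavior, and the DeTurck reformulation does not remove that difficulty, since the degeneracy sits in the class of metrics, not in the closed manifold.

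Worth contrasting with what the paper \emph{does} prove: Theorem~\ref{mainthm:RF_SO} establishes the conjecture for $\G=\SO(n)$ with isotropy groups that are block products of $\SO(k)$'s, via a route entirely disjoint from your PDE scheme. The argument there (Propositions~\ref{propn:conj_iso} and~\ref{propn:inv_diag}) is: conjugation by any diagonal $A\in\O(n)$ descends to an isometry $\Phi_A$ of the initial metric, the set of isometries is preserved by Ricci flow, and any metric on $\h^\perp$ invariant under all the $\Ad_A$'s is forced to be diagonal in the $E_{ij}$ basis. This replaces the degenerate-PDE issue with a symmetry argument and needs no analysis at the singular orbits at all. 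The trade-off is that it is tailored to the discrete symmetries of $\SO(n)$ block embeddings and does not generalize; your DeTurck approach, if the degenerate parabolic theory and the radiality of the DeTurck vector field could be settled, would in principle apply to any stably Ricci-diagonal basis. As stated, though, your argument has the genuine gap you already flag --- step (i) is unproven --- so it cannot be promoted beyond a conjecture, which is consistent with the paper.
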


The reason this is a non-trivial question is that an affirmative answer is equivalent to the existence of solutions to a degenerate parabolic system of coupled PDEs in the space ($r$) and time ($t$) variables with overdetermined boundary conditions. See \cite{kr19} for a more detailed discussion on this topic. Thus, this question is distinct from (and more difficult than) the similar question for homogeneous metrics. For the analogous result in the homogeneous setting, it is sufficient that the Ricci tensor of a diagonal metric also be diagonal in the same basis, and the conclusion follows from the existence and uniqueness theorem for ODEs.

We prove that the above conjecture holds for a special class of cohomogeneity one manifolds. To describe this class, we need to introduce some notation. Let $(M, \G)$ be a cohomogeneity one manifold with orbit space $M/\G \cong [0,L]$. Then $M$ admits a decomposition into disc bundles over the two non-principal orbits, $M = \G\times_{\K_-}D_-\cup \G\times_{\K_+}D_+$. Here $\H\subset\{\K_-, \K_+\}$ are (isotropy) subgroups of $\G$, and $D_\pm$ are Euclidean discs with $\partial D_\pm = S_\pm = \K_\pm/\H$. Conversely, any collection of groups $\H \subset \{\K_-, \K_+\} \subset \G$ where $\K_\pm/\H$ are spheres, gives rise to a cohomogeneity one manifold via the above union of disk bundles. We denote by $\h\subset \k_\pm\subset \gg$ the corresponding Lie algebras. Then the tangent space at a point $p$ in $M$ is identified with $\h^\perp \oplus \vspan\{\ddr\}$, where $r$ is a variable parameterizing the orbit space. We use $\B$ to denote a basis for $\h^\perp$ and define $\B' = \B \cup \{\ddr\}$.
\begin{mainthm}\label{mainthm:RF_SO}
 Let $M$ be a manifold with a cohomogeneity one action by $\G = \SO(n)$. Suppose that the isotropy groups $\H$, $\K_\pm$ are each products of block embeddings of $\SO(k)$'s. Let $\B$ be a basis for $\h^\perp$ that is a subset of the natural basis $\{E_{ij}\}$ of $\mathfrak{so}(n)$ and $\g_0$ a diagonal metric with respect to $\B'$. Then the Ricci flow solution starting at $\g_0$ is also diagonal with respect to $\B'$ as long as the flow exists.
\end{mainthm}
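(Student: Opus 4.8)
\medskip
\noindent\textit{Proof strategy.}
The plan is to reduce \eqref{eq:RF} to a parabolic system for the components of $\g$ in the frame $\B'$, observe that the equations governing the \emph{off-diagonal} components are linear and homogeneous because $\Ric_\g$ is diagonal whenever $\g$ is, and then conclude by a uniqueness argument adapted to the singular behaviour at the non-principal orbits.

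First I would fix a normal geodesic $\gamma(r)$, $r\in[0,L]$, and record the components of $\g(t)$ along $\gamma$ in $\B'=\B\cup\{\ddr\}$: the unknowns are $\phi^2=\g(\ddr,\ddr)$, the cross terms $\psi_a=\g(\ddr,E_a)$, and the symmetric matrix $(g_{ab})=(\g(E_a,E_b))$, all functions of $(r,t)$. Since $\G$ permutes the normal geodesics, the normal field $\ddr$ commutes with the action fields $E_a$, and $\G$-invariance of $\g(t)$ means these functions determine the metric. Feeding the cohomogeneity one formula for $\Ric_\g$ — in terms of the intrinsic Ricci tensor of the orbit metric $g_r$, the shape operator $L=\tfrac12\,g_r^{-1}\,\partial_r g_r$, and their $r$-derivatives — into \eqref{eq:RF} turns it into a second-order system in $(r,t)$ for $(\phi,\psi_a,g_{ab})$, singular at $r=0$ and $r=L$.

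The crucial input is that \emph{whenever $\g$ is diagonal with respect to $\B'$, so is $\Ric_\g$}. Because $\B\subset\{E_{ij}\}$, the bracket of two elements of $\B$ is always either zero, a multiple of a third element of $\{E_{ij}\}$, or an element of $\h$; in particular $\B$ is nice, so by Theorem~\ref{mainthm:nice_stably_Ric_diag} and Proposition~\ref{mainpropn:sRd_homogeneous} the intrinsic Ricci of the diagonal orbit metric $g_r$ is diagonal. The shape operator of a diagonal $g_r$ is diagonal, hence so are $L^2$ and $(\operatorname{tr}L)\,L$; and the Codazzi-type mixed term $\Ric(\ddr,E_a)=-(\operatorname{div}_{g_r}L)(E_a)$ vanishes — this is a finite computation using that the structure constants $\gamma_{bc}^d=Q([E_b,E_c],E_d)$ lie in $\{0,\pm1\}$ and that $\operatorname{tr}L$ and the $L$-eigenvalues are constant along orbits. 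Therefore, along diagonal metrics, $-2\Ric_\g$ is tangent to the affine subspace of $\G$-invariant metrics diagonal in $\B'$.

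It remains to promote this to genuine invariance of the flow, which — as stressed in the introduction — is not automatic for cohomogeneity one manifolds. Let $u=(\psi_a,\,(g_{ab})_{a\neq b})$ be the off-diagonal deviation of the (known) Ricci flow solution starting at the diagonal metric $\g_0$, so $u(\cdot,0)\equiv0$. Taylor-expanding the off-diagonal part of $-2\Ric_\g$ around $u=0$, the previous paragraph shows the zeroth-order term vanishes, so $u$ solves a system $\partial_t u=A(r,t)\,\partial_r^2 u+B(r,t)\,\partial_r u+C(r,t)\,u+(\text{terms of order}\ge2\text{ in }u)$ whose coefficients depend only on the diagonal part of $\g(t)$; the diffeomorphism degeneracy of Ricci flow acts on $\phi$ and on reparametrizations of $r$, not on $u$, so (after, if necessary, a DeTurck modification with a radial background field) this system is genuinely parabolic. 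The heart of the proof is then a uniqueness statement for this singular linear parabolic system with vanishing initial data: one must show that the only solution compatible with the smoothness conditions forced at $r=0,L$ by the disc-bundle description $M=\G\times_{\K_-}D_-\cup\G\times_{\K_+}D_+$ is $u\equiv0$. This is precisely the ``overdetermined boundary condition'' difficulty, and it is where the hypotheses on $\G$ and on $\H,\K_\pm$ do the real work: since $\K_\pm$ are again products of block $\SO(k)$'s, the collapsing subspaces $\k_\pm\ominus\h$ are spanned by coordinate vectors $E_{ij}$, so the indicial roots of the system at $r=0,L$ can be computed explicitly and the boundary conditions decouple across $\B'$, yielding uniqueness in the appropriate weighted function spaces. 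Everything else is a direct computation with the structure constants of $\mathfrak{so}(n)$ together with standard parabolic theory.
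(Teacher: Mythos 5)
Your route is genuinely different from the paper's and, as written, contains a substantive gap at the step you yourself identify as the heart of the matter.

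The paper does not touch the PDE system directly. Its proof of Theorem~\ref{mainthm:RF_SO} rests on Propositions~\ref{propn:conj_iso} and~\ref{propn:inv_diag}: conjugation by any diagonal $A\in\O(n)$ preserves $\H$ and $\K_\pm$ (here is where the block-embedding hypothesis is used) and therefore induces a diffeomorphism $\Phi_A$ of $M$; since $\Ad_A$ sends each $E_{ij}$ to $\pm E_{ij}$, every such $\Phi_A$ is an isometry of a metric diagonal in $\B'$. Isometries persist under Ricci flow, so each $\Phi_A$ remains an isometry of $\g(t)$. Conversely, for any two independent $E_{ij},E_{kl}$ one can choose a diagonal $A$ with $\Ad_A E_{ij}=E_{ij}$ and $\Ad_A E_{kl}=-E_{kl}$, forcing $\g(t)(E_{ij},E_{kl})=0$. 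Thus diagonality is pinned down by the preserved symmetry group, and no analysis of the degenerate system at $r=0,L$ is needed at all.

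Your plan is the one the introduction explicitly flags as the hard way: linearize the off-diagonal part of $-2\Ric$ at the diagonal solution, check the zeroth-order term vanishes (this you do correctly, and it is where niceness of $\B$ enters via Theorem~\ref{mainthm:nice_stably_Ric_diag} and Proposition~\ref{propn:ric_homog}), then invoke uniqueness for a singular linear parabolic system with $u(\cdot,0)\equiv 0$. The gap is that this uniqueness is asserted, not established. Saying that ``the indicial roots can be computed explicitly and the boundary conditions decouple across $\B'$, yielding uniqueness in the appropriate weighted function spaces'' names a program rather than proving a theorem: the operator degenerates where the orbit collapses, the smoothness conditions at $r=0,L$ are a nontrivial overdetermined set of compatibility relations (not Dirichlet or Neumann data), and the claim that a ``radial'' DeTurck modification makes the $u$-equations parabolic and gauge-decoupled needs an argument, since the DeTurck vector field generically mixes the radial and orbital directions near a singular orbit. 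Note also that you use the block-embedding hypothesis for a purpose (explicit indicial roots) different from the role it plays in the paper (ensuring $\Phi_A$ is well-defined on $M$); if your approach is to be completed, that is where the hypothesis would have to be re-deployed. In short, you correctly isolate what must be shown, but the crucial uniqueness step is left unproved, while the paper's symmetry argument avoids it entirely.
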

Here $\mathfrak{so}(n)$ is the Lie algebra of $\SO(n)$ and $E_{ij}$ is the matrix with a $1$ in the $(i,j)$ entry, a $-1$ in the $(j,i)$ entry and $0$'s in all other entries. As we will see, $\{E_{ij}\}_{1\leq i<j \leq n}$ is a nice basis for $\mathfrak{so}(n)$.

We remark that the main ingredient in the proof is the presence of ``extra'' discrete isometries for diagonal metrics on such manifolds. Additionally, the conclusion of Theorem \ref{mainthm:RF_SO} holds for a slightly larger class of manifolds, see Theorem \ref{thm:rf_diag_pres_so_disconnected}. These include, for example, cohomogeneity one actions on spheres and projective spaces.

As an application, in the next result we show that the techniques of the main theorem in \cite{bk16} extend to higher dimensions.
\begin{mainthm}\label{mainthm:sec_GZ}
 Let $M$ be a manifold with a  cohomogeneity one action by $\SO(n)$ with a group diagram where the groups $\H$, $\K_\pm$ are products of $\SO(k)$'s in  block embedding, and such that there are two singular orbits each of codimension two. Then $M$ admits a metric $\g_0$ such that $\sec_{\g_0} \geq 0$ and when evolved by the Ricci flow, $\g(t)$ immediately acquires some negatively curved $2$-planes.
\end{mainthm}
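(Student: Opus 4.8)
The plan is to take $\g_0$ to be a \emph{Grove--Ziller metric}. Since $M$ has exactly two non-principal orbits and each is singular of codimension two, the construction of Grove and Ziller produces an invariant metric on $M$ with $\sec_{\g_0}\ge 0$: on each half $\G\times_{\K_\pm}D_\pm$ it is of the form $\dd r^2 + \varphi_\pm(r)^2\,\theta_\pm^2$ plus an $r$-dependent left-invariant metric on the $\G/\K_\pm$-directions obtained by rescaling $Q$ on the isotropy summands, where $\theta_\pm$ is the coframe of the circle $\K_\pm/\H$ and the normal disc is a concave surface of revolution with $\varphi_\pm(0)=0$, $\varphi_\pm'(0)=1$. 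One first checks that such a metric can be chosen diagonal with respect to a basis $\B'$ of the type in Theorem~\ref{mainthm:RF_SO}: since $\H$, $\K_\pm$ are products of block $\SO(k)$'s, the isotropy representation on $\h^\perp$ decomposes into summands each spanned by a subset of $\{E_{ij}\}$ on which $Q$ restricts to a multiple of the standard form, and the Grove--Ziller metric only rescales $Q$ on these summands and on $\ddr$. Hence $\g_0$ is diagonal in $\B'=\B\cup\{\ddr\}$ for a suitable $\B\subset\{E_{ij}\}$, and by Theorem~\ref{mainthm:RF_SO} (or Theorem~\ref{thm:rf_diag_pres_so_disconnected}) the Ricci flow $\g(t)$ stays diagonal in the \emph{same, time-independent} basis $\B'$. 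Thus $\g(t)$ is encoded by scalar functions $f_i(r,t)$ on $[0,L]\times[0,T)$ satisfying the smoothness boundary conditions at $r=0,L$, and $\sec_{\g(t)}(\Pi)$ of a fixed coordinate plane $\Pi$ is a rational expression in the $f_i$ and their $r$-derivatives, well-defined for all $t$.

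Next I would pin down a flat plane of $\g_0$ that is poised to bend. Near the codimension-two singular orbit $\G/\K_\pm$, the sectional curvature of $\Pi_0=\vspan\{\ddr,X\}$, where $X$ generates $\K_\pm/\H$, is $-\varphi_\pm''/\varphi_\pm$, which is $\ge 0$ and vanishes exactly on the subinterval where $\varphi_\pm$ is affine; one arranges this interval to contain points arbitrarily close to $r=0$. Fix such an $r_\ast$; there $\sec_{\g_0}(\Pi_0)=0$. A short computation using $\partial_r^2\varphi_\pm|_{(r_\ast,0)}=0$ and the Ricci-flow relation $\partial_t(\varphi_\pm^2)=-2\,\Ric_{\g_0}(X,X)$ shows that $\frac{\dd}{\dd t}\big|_{t=0}\sec_{\g(t)}(\Pi_0)$ at $r_\ast$ has the same sign as $\partial_r^2\big(\Ric_{\g_0}(X,X)/\varphi_\pm\big)$ evaluated at $(r_\ast,0)$. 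In turn $\Ric_{\g_0}(X,X)$ is an explicit expression in $\varphi_\pm$, the scaling functions of the $\G/\K_\pm$-directions, and the structure constants $\gamma^k_{ij}$, containing a mixed contribution from each Killing field $Y$ for which $[X,Y]$ is a nonzero multiple of a basis vector.

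The crux, and the main obstacle, is to show that for a suitable Grove--Ziller profile one has $\partial_r^2\big(\Ric_{\g_0}(X,X)/\varphi_\pm\big)<0$ at some such $r_\ast$; then $\sec_{\g_0}(\Pi_0)(r_\ast)=0$ together with a strictly negative $t$-derivative forces $\sec_{\g(t)}(\Pi_0)(r_\ast)<0$ for all small $t>0$, which is the claim, the time-independence of $\B'$ being exactly what makes ``the same plane $\Pi_0$'' meaningful. Following \cite{bk16}, I would restrict to a neighborhood of the singular orbit, where the boundary conditions determine the low-order $r$-jets of $\varphi_\pm$ and of the orbit scaling functions and hence reduce the sign question to a few free parameters, isolate the contribution of the mixed $[X,Y]$-curvature, which is strictly negative, and check that the remaining orbit-direction contributions are nonnegative --- this last point being where the nice-basis structure of $\{E_{ij}\}$ and the block form of $\H,\K_\pm$ enter, and the only input genuinely beyond the four-dimensional case of \cite{bk16}. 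One then arranges the profile so that the negative contribution is not cancelled, e.g.\ by taking the flat interval long and concentrating the concavity of $\varphi_\pm$ away from $r_\ast$, and invokes continuity of $\sec_{\g(t)}$ in $t$ (valid as long as the flow exists) to conclude that negatively curved $2$-planes appear immediately.
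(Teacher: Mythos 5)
Your first two paragraphs track the paper's proof correctly: take the Grove--Ziller metric $\ggz$ (diagonal in a basis $\B'$ of standard $E_{ij}$'s and $\ddr$), and invoke Theorem~\ref{mainthm:RF_SO} so that $\g(t)$ stays diagonal in the same time-independent basis, which reduces the problem to tracking how coordinate $2$-planes bend as the scalar coefficients evolve by the PDE system \eqref{eq:RFcohom1}. The gap is in your choice of flat plane. You set $\Pi_0 = \vspan\{\ddr, X\}$ with $X$ the generator of $\K_\pm/\H$ --- that is, the \emph{collapsing} orbit direction $X_1$, with length $f_1 = \varphi_\pm$ --- and you correctly derive that $\frac{\dd}{\dd t}\big|_{t=0}\sec_{\g(t)}(\Pi_0)$ at $r_\ast$ (where $\varphi_\pm'' = 0$) has the sign of $\partial_r^2\big(\Ric_{\ggz}(X,X)/\varphi_\pm\big)$. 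But this quantity is actually \emph{nonnegative} near $r=0$, so your chosen plane becomes more positively curved, not negatively. Indeed, near $r=0$ one has $f_1 = ar$ and $f_j \equiv c$ for $j>1$, the second-fundamental-form terms in $\Ric(e_1,e_1)$ cancel, and the Lie-algebraic relation $\sum_{j,k>1}(\gamma_{jk}^1)^2 = -b_1$ (equivalent to $[\h, e_1]=0$, which holds here since $\p_\pm$ is a one-dimensional $\Ad_\H$-module and $\H^0$ is connected) forces $\Ric(e_1,e_1)/\varphi_\pm = \tfrac{-b_1}{4c^4} f_1^3$, whose second $r$-derivative is $\tfrac{-b_1}{4c^4}\cdot 6 a^3 r > 0$. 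So $\sec_{\g(t)}(\Pi_0)$ increases. This is also geometrically expected: the round cap over the singular orbit smooths under Ricci flow.

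The paper instead uses the plane $\mu(r)=\vspan\{\ddr, X_2^*\}$ for a \emph{non-collapsing} orbit direction $e_2$, which is flat near $r=0$ because $f_2\equiv c$ is constant there. The mechanism producing negativity is different from the one you isolated: it is the cross-term $\tfrac{f_1^2}{2c^3}(\gamma_{13}^2)^2$ in the evolution equation for $f_2$ (coming from a bracket $[e_1, e_3]$ proportional to $e_2$, with $e_1$ the collapsing direction). Since $f_1(0)=0$, $(f_1)_r(0)=a\in\Z_+$, and $(f_1)_{rr}(0)=0$ by the smoothness conditions, one gets $(f_2)_{trr}=\tfrac{(\gamma_{13}^2)^2}{c^3}\big((f_1)_r^2 + f_1(f_1)_{rr}\big)>0$ for small $r>0$, and hence $\tfrac{\dd}{\dd t}\sec(\mu(r))=-\tfrac{(f_2)_{trr}}{f_2}<0$. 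You should replace $\Pi_0$ by $\mu$, use the explicit formula for $(f_2)_t$ from Proposition~\ref{propn:RFeqns} rather than the identity $\partial_t\varphi^2 = -2\Ric(X,X)$, and the sign computation then closes cleanly without the ``crux'' step you flagged.
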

This paper is organized as follows. In Section \ref{sec:cohom1mfds} we give some background and explain our notation for invariant cohomogeneity one metrics. The relevant notation for homogeneous spaces is also presented implicitly in this section. In Section \ref{sec:nice_stably_Ric_diag} we prove Theorem \ref{mainthm:sRd_homogeneous}. In Section \ref{sec:examples} we provide examples and investigate the nice basis condition for some compact semisimple Lie algebras. Section \ref{sec:SO_grp_diagram} contains the proof of Theorem \ref{mainthm:RF_SO}. Finally, Theorem \ref{mainthm:sec_GZ} is proved in Section \ref{sec:appln}.
%%%
\subsection*{Acknowledgements} I am grateful to Wolfgang Ziller and Renato Bettiol for many detailed and helpful comments. I also thank Lee Kennard and William Wylie for their encouragement. I thank the anonymous referee for their useful comments and suggestions.
%%%
%%%%%%
\section{Cohomogeneity one manifolds}\label{sec:cohom1mfds}
In this section, we recall the definition of a cohomogeneity one group action and describe the structure of invariant metrics on a cohomogeneity one manifold. For more details one may refer to \cite{gz00}, \cite{gz02}.
\subsection{Cohomogeneity one structure}
A Lie group $\G$ is said to act on a manifold $M$ with cohomogeneity one if the orbit space $M/\G$ is 1-dimensional (equivalently, if the generic orbits of the group action are codimension one hypersurfaces). If $M$ is compact, this implies that $M/\G$ is either an interval $[0,L]$ or a circle $S^1$. The former is guaranteed when the manifold is simply connected. We will assume from now on that $M/\G = [0,L]$. Let $\pi$ be the quotient map $M\rightarrow M/\G$. The generic orbits, i.e. $\pi^{-1}(r)$ for $r\in(0,L)$ are called principal orbits. The open set $M^0$ formed by the union of all the principal orbits is sometimes referred to as the principal part of $M$. The orbits $B_- = \pi^{-1}(0)$ and $B_+ = \pi^{-1}(L)$ are called singular orbits.

Pick any point $x_- \in B_-$ and let $\gamma(r)$ be a minimal geodesic normal to $B_-$, with $\gamma(0) = x_-$ and meeting the other singular orbit $B_+$ for the first time in $\gamma(L) = x_+$. Then $\gamma(r)$ for $r\in(0,L)$ parametrizes the orbit space. The isotropy group is the same group $\H\subset \G$ at all points $\gamma(r)$ with $0<r<L$ and $\H$ is called the principal isotropy group. The isotropy groups $\K_\pm$ at $x_\pm$ are the singular isotropy groups. Thus each principal orbit is isometric to a homogeneous space $\G/\H$ and the singular orbits $B_\pm$ are isometric to $\G/\K_\pm$ respectively.

By the Slice Theorem, $M$ is composed of disk bundles over the singular orbits $B_-$ and $B_+$, glued along their common boundary $\G/\H$. This also implies that $\K_\pm/\H$ are diffeomorphic to spheres $S^{l_\pm}$. The data $\H\subset \K_\pm\subset \G$ is called a group diagram, and determines the cohomogeneity one manifold up to equivariant diffeomorphism.
\subsection{Invariant metrics}
By symmetry, any invariant metric is completely determined by specifying it along $\gamma$. Thus a cohomogeneity one metric on the principal part of $M$ has the following form:
\begin{equation*}
 \begin{aligned}
  \g(r) = \dd r^2 + \g_r, \mbox{\,\, $r \in (0, L)$},
 \end{aligned}
\end{equation*}
where $\g_r$ is a one parameter family of homogeneous metrics on the fixed homogeneous space $\G/\H$. This metric extends across the singular orbits to yield a smooth metric on all of $M$ if and only if the metric and its derivatives satisfy certain differential conditions known as \textit{smoothness conditions} at the endpoints $r=0$ and $r=L$ (see \cite{vz18}).
\subsection{Diagonal metrics}
We will now explain more carefully what we mean by a diagonal metric on a cohomogeneity one manifold. Let $\H\subset \K_\pm\subset \G$ be the group diagram and let $\mathfrak{h}\subset \mathfrak{k}_\pm\subset \mathfrak{g}$ be the corresponding Lie algebras. Let $Q$ be a bi-invariant metric on $\mathfrak{g}$ and $\mathfrak{m}_\pm=\mathfrak{k}_\pm^\perp$, $\mathfrak{p}_\pm=\mathfrak{h}^\perp \cap \mathfrak{k}_\pm$ with respect to this metric. Thus $\mathfrak{g} = \mathfrak{h} \oplus \mathfrak{p}_- \oplus \mathfrak{m}_- = \mathfrak{h} \oplus \mathfrak{p}_+ \oplus \mathfrak{m}_+$.

Let $\{X_i\}_{i=1}^m$ be a $Q$-orthonormal basis for $\mathfrak{h}^\perp$ that respects the decompositions $\mathfrak{h}^\perp = \mathfrak{m}_+\oplus\mathfrak{p}_+ = \mathfrak{m}_-\oplus\mathfrak{p}_-$. The existence of such a basis is also an assumption on the group diagram! For instance, the group diagram described in the following example \emph{does not} admit such a basis.
\begin{example}
 Let $\{e_1, e_2, e_3\}$ be a $Q$-orthonormal basis for $\gg = \mathfrak{so}(3)$. Consider the group diagram whose corresponding Lie algebras are given by
 \begin{align*}
  \k_- &= \vspan\{e_1\};\,\,\,  \k_+ = \vspan\left\{e_1+e_2-2e_3\right\};\,\,\, \h = \{0\}\\
%   \end{align*}
%  \begin{align*}
  \implies \p_- &= \vspan\{e_1\};\,\, \m_- = \vspan\{e_2, e_3\};\,\, \p_+ = \vspan\left\{e_1+e_2-2e_3\right\};\\\m_+ &= \vspan\left\{ e_1+e_2+e_3, e_1-e_2 \right\}
 \end{align*}
 A basis that respects the decomposition $\h^\perp = \m_-\oplus\p_-$ must have $v_1 = e_1$ as an element. A basis that respects the decomposition $\h^\perp = \m_+\oplus\p_+$ must have $v_2 = \frac{e_1+e_2-2e_3}{\sqrt{6}}$ as an element. Since $Q(v_1,v_2) \neq 0$, such a basis is not $Q$-orthogonal.
\end{example}
The vector space $\mathfrak{h}^\perp$ can be identified with the tangent space to $\G/\H$ at $[\H]$ in the following way. Let $\{X_i^*(r)\}_{i=1}^m$ be Killing vector fields along the curve $\gamma$, defined by
\begin{align*}
 X_i^*(r) = \frac{\dd}{\dd s}\exp(s\, X_i)\cdot\gamma(r)\big|_{s=0}
\end{align*}
Then $\{X_i^*(r)\}_{i=1}^m$ is a basis for $T_{[\H]}\G/\H$ at $\gamma(r) = [\H]$. Also, for $i=1,\,\cdots,\,m$, let $\omega_i$ be the 1-form along $\gamma$ dual to the vector field $X_i^*$. A diagonal metric is one which is of the form
\begin{equation*}
 \begin{aligned}
    \g(r) = h(r)^2 \dd r^2 + \sum_{i=1}^m f_i(r)^2 \omega_i^2, \text{  $ r\in (0,L)$}
 \end{aligned}
\end{equation*}
along a fixed geodesic orthogonal to all the orbits.
%%%
\begin{remark}
 The metric is not necessarily diagonal at points outside the geodesic $\gamma$. The value of $\, \g(X_i^*, X_j^*)$ at an arbitrary point of $M$ is determined by its value along $\gamma$, with the help of the group action. In particular,
 \begin{align*}
  \g(X_i^*, X_j^*)|_{g\H} = \g(\Ad_{g^{-1}}X_i^*, \Ad_{g^{-1}}X_j^*)|_{\H}
 \end{align*}
Since the metric on the homogeneous space $\G/\H$ is left-invariant but not necessarily bi-invariant, the Killing vector fields $X_i^*$ and $X_j^*$ for $i\neq j$ will in general not be orthogonal at points not on $\gamma$.
\end{remark}
In the next example, we provide a basis that respects the group diagram, but for which we can explicitly check that the basis is not nice and not stably Ricci-diagonal.
\begin{example}
 The Kervaire sphere $S^5$ has a cohomogeneity one action (see \cite{gvwz06}) with the following group diagram:
\begin{align*}
 \G &= \SO(2)\times \SO(3),\\ \K_- &= \SO(2) = (e^{-i\theta}, \diag(R(d\theta), 1)),\\ \K_+ &= \O(2) = (\det B, \diag(\det B, B)),\\ \H &= \Z_2 = \langle\, ( -1, \diag(-1, -1, 1) )\,\rangle,
\end{align*}
where $d$ is an odd integer. We select the following basis for $\gg$, which respects the inclusions $\h\subset\mathfrak{k}_\pm\subset\gg$ and is orthonormal in the natural bi-invariant metric on $\G$:
\begin{align*}
 X_1 = \frac{1}{d^2+1}(-I, dE_{12}), X_2 = \frac{1}{d^2+1}(dI,E_{12}), X_3 = (0, E_{13}), X_4 = (0, E_{23})
\end{align*}
Then, $[X_3, X_4] = -\frac{d}{d^2+1}X_1 - \frac{1}{d^2+1}X_2$, so this is not a nice basis.

This basis is also not stably Ricci-diagonal. Indeed, we can choose the metric such that at some point in the interior of the geodesic $\gamma$, the functions $f_i$ all have the same value. At such a point, \cite[Proposition 1.14]{gz02} implies $\Ric(X_1, X_2) = \frac{1}{2}\frac{d}{(d^2+1)^2} \neq 0$.
\end{example}
Sometimes we will also need the following notation. The isotropy group $\H$ acts on $\n = \h^\perp$ via the adjoint action, and we have $\n = \n_1 \oplus \cdots \oplus \n_l$ a sum of $Q$-orthogonal irreducible $\H$-modules. Then, by Schur's lemma, $\g|_{\n_i}$ is a multiple of $Q|_{\n_i}$, $\g|_{\n_i} = f_i(r)\cdot Q|_{\n_i}$ at $\gamma(r)$. It will usually be clear from context whether we are using a given index to denote an $\H$-module or an individual vector.
%%%
%%%
\section{Nice bases and stably Ricci-diagonal bases}\label{sec:nice_stably_Ric_diag}

In this section, we give a Lie-algebraic characterization for a basis to be stably Ricci-diagonal, proving Theorem \ref{mainthm:sRd_homogeneous}. First, we note some properties of the Lie algebra structure constants, which will enable us to simplify the expression for $\Ric_\g$.

Let $\G$ be a compact group with bi-invariant metric $Q$ and $\{e_l\}$ a $Q$-orthonormal basis for $\gg$. If $\gamma_{ij}^k = Q([e_i, e_j], e_k)$ then
 \begin{align*}
 \gamma_{ij}^k &= -\gamma_{ik}^j = -\gamma_{kj}^i = -\gamma_{ji}^k
 \end{align*}
since $\ad_X$ is skew-symmetric in $Q$. In particular, if any two indices are equal, then that structure constant is zero.

We refer to the formulae for Ricci curvature as derived in \cite[Proposition 1.14]{gz02}. The formulae there are for the Ricci curvature of a cohomogeneity one manifold, but it is easy to read off the Ricci curvature of a principal orbit/ homogeneous space by subtracting the second fundamental form contribution, i.e. any term involving derivatives. Using the skew-symmetry to collect terms and simplify, we write the expression for $\Ric_\g$ for a homogeneous space $\G/\H$ as follows:
\begin{equation}\label{eqn:ric_homog_biinv}
\begin{aligned}
 \Ric^{\G/\H}(e_i, e_j) = \sum_{r,s}\left( \frac{f_i^2f_j^2 - (f_r^2- f_s^2)^2}{4f_r^2f_s^2} \sum_{\substack{e_\alpha \in \n_r\\e_\beta \in \n_s}} \gamma_{\alpha\beta}^i\gamma_{\alpha\beta}^j \right)
%  + \sum_{r}\left( \frac{f_i^2f_j^2}{4f_r^4}\sum_{\substack{e_\alpha, e_\beta \in \n_r}} \gamma_{\alpha\beta}^i\gamma_{\alpha\beta}^j \right)
\end{aligned}
\end{equation}
where $e_i\in \n_i$, $e_j\in \n_j$ and $i\neq j$. We omit the formula for the restriction of $\Ric_\g$ to a single $\H$-module $\n_i$, since irreducibility of $\n_i$ implies $\Ric_\g|_{\n_i}$ is diagonal.

With these preliminaries, we are now ready to show that a nice basis is stably Ricci-diagonal.
%%%
\begin{proposition}\label{propn:Ric-diag}
 Consider a homogeneous space $\G/\H$. Let $Q$ be a bi-invariant metric on $\G$ and let $\B = \{e_\alpha\}$ be a $Q$-orthonormal basis for $\h^\perp$ such that $\B$ is a nice basis. Then $\B$ is stably Ricci-diagonal.
\end{proposition}
\begin{proof}
 Suppose $\B$ is a nice basis. Let $\g$ be a $\B$-diagonal metric on $\G/\H$. If $\n_i$ and $\n_j$ are inequivalent modules then they are automatically orthogonal with respect to any invariant symmetric 2-tensor. In particular we have $\Ric(\n_i, \n_j) = 0$ whenever $\n_i$ and $\n_j$ are inequivalent modules.
 
 Now, suppose $\n_i$ and $\n_j$ are equivalent modules. By \eqref{eqn:ric_homog_biinv} and the nice basis condition, for $e_i \in \n_i$, $e_j \in \n_j$ with $i\neq j$, we have
 \begin{align*}
  \Ric^{\G/\H}(e_i, e_j) = \sum_{r,s}\left( \frac{f_i^2f_j^2 - (f_r^2- f_s^2)^2}{4f_r^2f_s^2} \cdot 0 \right) = 0.
 \end{align*}
 Hence $\B$ is stably Ricci-diagonal.
\end{proof}
We will now prove the converse.
\begin{theorem}\label{thm:nice_stably_Ric_diag}
 Let $\G/\H$ be a compact homogeneous space and $Q$ a bi-invariant metric on $\gg$. Let $\B = \{e_i\}$ be a $Q$-orthonormal basis for $\h^\perp$. Suppose that $\B$ is stably Ricci-diagonal. Then $\B$ is a nice basis.
\end{theorem}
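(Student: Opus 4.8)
The plan is to argue by contradiction, using the Ricci formula \eqref{eqn:ric_cpt_biinv} together with the fact that off-diagonal Ricci entries vanish \emph{identically} in the metric parameters $f_1,\dots,f_n$, which for a Lie group range freely over the positive reals.

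Suppose $\B$ is not nice. Then there is a pair of distinct indices $\{a,b\}$ for which $[e_a,e_b]$ has at least two nonzero components, i.e.\ $\gamma_{ab}^i\neq 0$ and $\gamma_{ab}^j\neq 0$ for some $i\neq j$. By the skew-symmetry relations $\gamma_{rs}^k=-\gamma_{rk}^s=-\gamma_{ks}^r$ noted above, any structure constant with a repeated index is zero; hence $i,j,a,b$ are four distinct indices, and more generally every pair $\{r,s\}$ contributing a nonzero term $\gamma_{rs}^i\gamma_{rs}^j$ to \eqref{eqn:ric_cpt_biinv} must satisfy $\{r,s\}\cap\{i,j\}=\emptyset$. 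This disjointness is the key point, and the step I expect to require the most care: it is exactly what allows $f_i^2f_j^2$ to be pulled out as a common factor that is independent of the remaining metric parameters.

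Writing $x_k=f_k^2$, equation \eqref{eqn:ric_cpt_biinv} then reads
\[
 2\,\Ric^\G(e_i,e_j)=x_ix_j\Big(\sum_{\{r,s\}}\tfrac{\gamma_{rs}^i\gamma_{rs}^j}{x_rx_s}\Big)-\sum_{\{r,s\}}\tfrac{(x_r-x_s)^2}{x_rx_s}\,\gamma_{rs}^i\gamma_{rs}^j =: x_ix_j\,C - D,
\]
where $C$ and $D$ depend only on the $x_k$ with $k\notin\{i,j\}$. Since $\B$ is stably Ricci-diagonal, this vanishes for all $x_1,\dots,x_n>0$; fixing the variables $x_k$ with $k\notin\{i,j\}$ and letting the product $x_ix_j$ run over $(0,\infty)$ forces $C\equiv 0$ (and $D\equiv 0$). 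Finally, substituting $t_k=1/x_k$ turns $C$ into the polynomial $\sum_{\{r,s\}}\gamma_{rs}^i\gamma_{rs}^j\,t_rt_s$, a linear combination of pairwise distinct degree-two monomials, which can vanish on an open subset of Euclidean space only if every coefficient is zero. Hence $\gamma_{rs}^i\gamma_{rs}^j=0$ for all pairs $\{r,s\}$ — in particular $\gamma_{ab}^i\gamma_{ab}^j=0$, contradicting the choice of $\{a,b\}$. Therefore $\B$ is nice, and combined with Corollary \ref{cor:nice->sRd} this proves Theorem \ref{mainthm:nice_stably_Ric_diag}.
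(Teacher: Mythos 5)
Your argument is correct, and it takes a genuinely different route from the paper's. The paper's proof of Theorem~\ref{thm:nice_stably_Ric_diag} never isolates $f_i^2f_j^2$ as a common factor; instead it plugs in three families of explicit diagonal metrics (all $f_k=c$; one $f_{r_0}=\sqrt{2}c$ and the rest $c$; two $f_{r_0}=f_{r_1}=\sqrt{2}c$ and the rest $c$), producing a system of linear relations among the quantities $\gamma_{rs}^i\gamma_{rs}^j$ which it then combines by subtraction to isolate $\gamma_{r_0r_1}^i\gamma_{r_0r_1}^j=0$. Your proof instead starts from the observation --- which the paper records in passing as ``any structure constant with a repeated index vanishes'' but does not exploit in this way --- that every pair $\{r,s\}$ contributing nontrivially to $\Ric(e_i,e_j)$ satisfies $\{r,s\}\cap\{i,j\}=\emptyset$. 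That disjointness is exactly what lets you factor the off-diagonal Ricci entry as $x_ix_j C-D$ with $C,D$ independent of $x_i,x_j$, so that varying $x_ix_j$ kills both $C$ and $D$, and the resulting identity $C\equiv0$ is a quadratic polynomial $\sum_{\{r,s\}}\gamma_{rs}^i\gamma_{rs}^j\,t_rt_s$ vanishing on the open positive orthant, forcing each coefficient to vanish. Both proofs reach the same conclusion $\gamma_{rs}^i\gamma_{rs}^j=0$ for all $r,s$ and $i\neq j$; what your version buys is a single, conceptually transparent step (polynomial identity from an open set) in place of the paper's sequence of hand-chosen test metrics and algebraic eliminations, at the small cost of needing to notice the disjointness constraint up front. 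Either argument is sound, and your concluding appeal to Corollary~\ref{cor:nice->sRd} correctly assembles Theorem~\ref{mainthm:nice_stably_Ric_diag}.
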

\begin{proof}
 Let $\g$ be a $\B$-diagonal metric on $\G/\H$, given by $\g|_{\n_i} = f_i^2Q|_{\n_i}$.
  By \eqref{eqn:ric_homog_biinv}, the off-diagonal terms of Ricci are given by
  \begin{align}\label{eqn:Ric_cross}
   \Ric^{\G/\H}(e_i, e_j)
   %= \sum_{r,s}\left( \frac{f_i^2f_j^2 - (f_r^2- f_s^2)^2}{4f_r^2f_s^2} \sum_{\substack{e_\alpha \in \n_r\\e_\beta \in \n_s}} \gamma_{\alpha\beta}^i\gamma_{\alpha\beta}^j \right) 
   = \sum_{r,s}\left( \frac{f_i^2f_j^2 - (f_r^2- f_s^2)^2}{4f_r^2f_s^2} \Upsilon_{rs}^{ij} \right),
  \end{align}
where we have set $\sum_{\substack{e_\alpha \in \n_r\\e_\beta \in \n_s}} \gamma_{\alpha\beta}^i\gamma_{\alpha\beta}^j = \Upsilon_{rs}^{ij}$ for convenience in the upcoming calculations. To show that $\B$ is a nice basis, it will suffice to show that $\Upsilon_{rs}^{ij} = 0$ whenever $i\neq j$.

If $\B$ is stably Ricci-diagonal then the right hand side of \eqref{eqn:Ric_cross} must equal $0$ for every choice of values $f_k$. This gives us equations the symbols $\Upsilon_{ab}^{cd}$ must satisfy, and choosing sufficiently many values of the constants $f_k$, we will show that $\Upsilon_{rs}^{ij} = 0$ whenever $i\neq j$. The skew-symmetry of structure constants implies that $\Upsilon_{rs}^{ij} = \Upsilon_{sr}^{ij}$ for all pairs $r,s$, and we will use this fact implicitly in the calculations below.

If we let $\g_0$ be the diagonal metric where $f_k = c$ for each $k$, then $\Ric_{\g_0}(e_i, e_j)=0$ implies
\begin{equation}\label{eq:hsum_zero1}
 \sum_{r,s} \Upsilon_{rs}^{ij} = 0.
\end{equation}
Next, fix one $r_0 \neq i, j$. Consider the metric $\g_1$ where $f_{r_0} = \sqrt{2}c$ and $f_k=c$ for all $k\neq r_0$. Then $\Ric_{\g_1}(e_i, e_j) = 0$ implies
\begin{align*}
 \frac{c^4 - (2c^2 - 2c^2)^2}{16c^4}\Upsilon_{r_0r_0}^{ij} + 2\sum_{s\neq r_0}\frac{c^4 - (2c^2 - c^2)^2}{8c^4}\Upsilon_{r_0 s}^{ij} &+ \sum_{\substack{r,s:\\ r,s \neq r_0}} \frac{c^4 - (c^2-c^2)^2}{4c^4} \Upsilon_{rs}^{ij} = 0\\
 \implies & \frac{1}{4}\Upsilon_{r_0r_0}^{ij} + \sum_{\substack{r,s:\\ r,s \neq r_0}}\Upsilon_{rs}^{ij} = 0 \numberthis \label{eq:hsum_zeroo}
\end{align*}
With the same fixed $r_0$, consider the metric $g_2$ where $f_{r_0} = \sqrt{3}c$ and $f_k=c$ for all $k\neq r_0$. Then $\Ric_{\g_1}(e_i, e_j) = 0$ implies
\begin{align*}
 \frac{c^4 - (3c^2 - 3c^2)^2}{36c^4}\Upsilon_{r_0r_0}^{ij} + 2\sum_{s\neq r_0}\frac{c^4 - (3c^2 - c^2)^2}{12c^4}\Upsilon_{r_0 s}^{ij} &+ \sum_{\substack{r,s:\\ r,s \neq r_0}} \frac{c^4 - (c^2-c^2)^2}{4c^4} \Upsilon_{rs}^{ij} = 0\\
 \implies & \frac{1}{9}\Upsilon_{r_0r_0}^{ij} - 2\sum_{s\neq r_0} \Upsilon_{r_0 s}^{ij} + \sum_{\substack{r,s:\\ r,s \neq r_0}}\Upsilon_{rs}^{ij} = 0 \numberthis \label{eq:hsum3_zeroo}
\end{align*}
Solving \eqref{eq:hsum_zero1}, \eqref{eq:hsum_zeroo} and \eqref{eq:hsum3_zeroo} simultaneously  we see that for each fixed $r_0\neq i,j$,
\begin{equation}\label{eq:hsum_zero2}
 \Upsilon_{r_0r_0}^{ij} = 0,\,\, \sum_{s\neq r_0} \Upsilon_{r_0 s}^{ij} = 0 \mbox{ and } \sum_{\substack{r,s:\\ r,s \neq r_0}}\Upsilon_{rs}^{ij} = 0 
\end{equation}
Now, fix a pair of \textit{distinct} indices $r_0\neq i,j$ and $r_1\neq i,j$. Consider the diagonal metric $\g_3$ where $f_{r_0} = f_{r_1} = \sqrt{2}c$ and $f_s = c$ for all $s\neq r_0, r_1$. Since by \eqref{eq:hsum_zero2}, $\Upsilon_{r_0r_0}^{ij} = 0$ and $\Upsilon_{r_1r_1}^{ij} = 0$, the condition $\Ric_{\g_3}(e_i,e_j)=0$ can be written as
\small
\begin{multline*}
\begin{aligned}
 2\sum_{s\neq r_0, r_1} \frac{f_i^2f_j^2 - (f_{r_0}^2-f_s^2)^2}{4f_{r_0}^2f_s^2} \Upsilon_{r_0s}^{ij} + 2\sum_{s\neq r_0, r_1} \frac{f_i^2f_j^2 - (f_{r_1}^2-f_s^2)^2}{4f_{r_1}^2f_s^2} \Upsilon_{r_1s}^{ij} + 2\frac{f_i^2f_j^2 - (f_{r_0}^2 - f_{r_1}^2)^2}{4f_{r_0}^2f_{r_1}^2}\Upsilon_{r_0r_1}^{ij}\\ + \sum_{\substack{r,s:\\r,s \neq r_0,r_1}}\frac{f_i^2f_j^2 - (f_r^2-f_s^2)^2}{4f_r^2f_s^2}\Upsilon_{rs}^{ij} = 0\\
\mbox{\normalsize Hence, } \sum_{s\neq r_0, r_1} \frac{c^4 - (2c^2-c^2)^2}{4c^4} \Upsilon_{r_0s}^{ij} + \sum_{s\neq r_0, r_1} \frac{c^4 - (2c^2-c^2)^2}{4c^4} \Upsilon_{r_1s}^{ij} + \frac{c^4 - (2c^2 - 2c^2)^2}{8c^4}\Upsilon_{r_0r_1}^{ij}\\ + \sum_{\substack{r,s:\\r,s \neq r_0,r_1}}\frac{c^4 - (c^2-c^2)^2}{4c^4}\Upsilon_{rs}^{ij} = 0\\
%  0 + 0 + \frac{1}{8}\Upsilon_{r_0r_1}^{ij} + \frac{1}{2} \sum_{\substack{\{r,s\}:\\ r,s \neq r_0,r_1}}\Upsilon_{rs}^{ij} = 0
\end{aligned}
\end{multline*}
\normalsize
Therefore we obtain
\begin{equation}\label{eq:hsum_zero3}
 \Upsilon_{r_0r_1}^{ij} + 2 \sum_{\substack{r,s:\\ r,s \neq r_0,r_1}}\Upsilon_{rs}^{ij} = 0
\end{equation}
Equation \eqref{eq:hsum_zero1} can be written as follows:
\begin{align*}
%  \sum_{(r,s) :\{r,s\}\cap\{i,j\} = \emptyset} \gamma_{ir}^s \gamma_{jr}^s = 0\\
 &\Upsilon_{r_0r_0}^{ij} + \Upsilon_{r_1r_1}^{ij} - 2\Upsilon_{r_0r_1}^{ij} + 2\sum_{s \neq r_0} \Upsilon_{r_0s}^{ij} + 2\sum_{s \neq r_1} \Upsilon_{r_1s}^{ij} + \sum_{\substack{r,s:\\ r,s \neq r_0, r_1}}\Upsilon_{rs}^{ij} = 0\\
 \mbox{and hence } & 0 + 0 - 2\Upsilon_{r_0r_1}^{ij} + 0 + 0 + \sum_{\substack{r,s:\\ r,s \neq r_0, r_1}}\Upsilon_{rs}^{ij} =  0 \mbox{ by $\eqref{eq:hsum_zero2}$}\\
 \mbox{Thus,}&\\
 & - 2\Upsilon_{r_0r_1}^{ij} + \sum_{\substack{r,s:\\ r,s \neq r_0, r_1}}\Upsilon_{rs}^{ij} =  0 \numberthis \label{eq:hsumzero}
\end{align*}
Combining \eqref{eq:hsumzero} with \eqref{eq:hsum_zero3} we obtain that for distinct $r_0, r_1$ with $r_0\neq i,j$ and $r_1\neq i,j$,
\begin{align}\label{eq:hsumr0r1}
 \sum_{\substack{r,s:\\ r,s \neq r_0, r_1}}\Upsilon_{rs}^{ij} =  0 \mbox{  and  }\Upsilon_{r_0r_1}^{ij}  = 0
\end{align}
In particular, $\Upsilon_{r_0r_1}^{ij}  = 0$ whenever $r_0\neq i,j$ and $r_1\neq i,j$.

Next, we deal with $\Upsilon_{rs}^{ij}$ where at least one of $r, s$ equals $i$ or $j$. Accordingly, consider the metric $\g_4$ where $f_i = f_j = c$ and $f_k = \sqrt{2}c$ for all $k\neq i, j$. Then $\Ric_{\g_4}(e_i, e_j) = 0$ implies
\begin{flalign*}
 &\frac{c^4 - (c^2-c^2)^2}{4c^4}\Upsilon_{ii}^{ij} + \frac{c^4 - (c^2-c^2)^2}{4c^4}\Upsilon_{jj}^{ij} + \frac{c^4 - (c^2-c^2)^2}{4c^4}2\Upsilon_{ij}^{ij} + \frac{c^4 - (2c^2 - c^2)^2}{8c^4} 2\sum_{r\neq i,j}\Upsilon_{ri}^{ij}\\
 & \hspace{4.9cm}+ \frac{c^4 - (2c^2 - c^2)^2}{8c^4} 2\sum_{r\neq i,j}\Upsilon_{rj}^{ij} + \frac{c^4 - (2c^2 - 2c^2)^2}{16c^4}\sum_{r,s \neq i,j}\Upsilon_{rs}^{ij} = 0\\
 &\implies \frac{1}{4}\Upsilon_{ii}^{ij} + \frac{1}{4}\Upsilon_{jj}^{ij} + \frac{1}{2}\Upsilon_{ij}^{ij} + 0 + 0 + 0 = 0 \hspace{0.5cm}\mbox{ (since $\Upsilon_{rs}^{ij} = 0$ when $r, s \neq i,j$)}\\
 &\implies \Upsilon_{ii}^{ij} + \Upsilon_{jj}^{ij} + 2\Upsilon_{ij}^{ij} = 0 \numberthis \label{eq:hsum0ij1}
\end{flalign*}
Next, we re-write equation \eqref{eq:hsum_zero1} as follows, to reflect the indices $i$ and $j$. 
\begin{align*}
 \Upsilon_{ii}^{ij} + \Upsilon_{jj}^{ij} + 2\Upsilon_{ij}^{ij} + 2\sum_{r\neq i,j}\Upsilon_{ri}^{ij} + 2\sum_{r\neq i,j}\Upsilon_{rj}^{ij} + \sum_{r,s \neq i,j}\Upsilon_{rs}^{ij} = 0
\end{align*}
Since $\Upsilon_{rs}^{ij} = 0$ when $r,s\neq i,j$, this reduces to:
\begin{align}\label{eq:hsum0ij2}
 \Upsilon_{ii}^{ij} + \Upsilon_{jj}^{ij} + 2\Upsilon_{ij}^{ij} + 2\sum_{r\neq i,j}\Upsilon_{ri}^{ij} + 2\sum_{r\neq i,j}\Upsilon_{rj}^{ij}  = 0
\end{align}
In subsequent calculations, we will use that $\Upsilon_{rs}^{ij} = 0$ when $r,s\neq i,j$ without explicitly stating it each time. Next, consider the metric $\g_5$ such that $f_i = c$, $f_j = \sqrt{2}c$, and $f_k = c$ for all $k\neq i,j$. Then $\Ric_{\g_5}(e_i, e_j) = 0$ implies
\begin{align*}
 \frac{2c^4 - (c^2 - c^2)^2}{4c^4} \Upsilon_{ii}^{ij} + \frac{2c^4 - (2c^2 - 2c^2)^2}{16c^4} \Upsilon_{jj}^{ij} + \frac{2c^4 - (c^2 - 2c^2)^2}{8c^4} 2\Upsilon_{ij}^{ij} + 2\sum_{r\neq i,j} \frac{2c^4 - (c^2 - c^2)^2}{4c^4} \Upsilon_{ri}^{ij}\\
 + 2\sum_{r\neq i,j} \frac{2c^4 - (c^2 - 2c^2)^2}{8c^4} \Upsilon_{rj}^{ij} + \sum_{r,s \neq i,j} \frac{2c^4 - (c^2 - c^2)^2}{4c^4} \Upsilon_{rs}^{ij} = 0\\
 \implies \frac{1}{2}\Upsilon_{ii}^{ij} + \frac{1}{8}\Upsilon_{jj}^{ij} + \frac{1}{4}\Upsilon_{ij}^{ij} + \sum_{r\neq i,j}\Upsilon_{ri}^{ij} + \frac{1}{4}\sum_{r\neq i,j}\Upsilon_{rj}^{ij}  = 0 \numberthis \label{eq:hsum0ij3}
\end{align*}
Similarly, if we consider the metric $\g_6$ where $f_i = \sqrt{2}c$, $f_j = c$, and $f_k = c$ for all $k\neq i, j$, we get:
\begin{align}\label{eq:hsum0ij4}
 \frac{1}{8}\Upsilon_{ii}^{ij} + \frac{1}{2}\Upsilon_{jj}^{ij} + \frac{1}{4}\Upsilon_{ij}^{ij} + \frac{1}{4} \sum_{r\neq i,j}\Upsilon_{ri}^{ij} + \sum_{r\neq i,j}\Upsilon_{rj}^{ij}  = 0
\end{align}
Also, the metric $\g_7$ where $f_i = c$, $f_j = \sqrt{2}c$ and $f_k = \sqrt{2}c$ for all $k\neq i, j$ yields
\begin{align}\label{eq:hsum0ij5}
 \frac{1}{2}\Upsilon_{ii}^{ij} + \frac{1}{8}\Upsilon_{jj}^{ij} + \frac{1}{4}\Upsilon_{ij}^{ij} + \frac{1}{4} \sum_{r\neq i,j}\Upsilon_{ri}^{ij} + \frac{1}{4}\sum_{r\neq i,j}\Upsilon_{rj}^{ij}  = 0
\end{align}
Solving \eqref{eq:hsum0ij1}, \eqref{eq:hsum0ij2}, \eqref{eq:hsum0ij3}, \eqref{eq:hsum0ij4} and \eqref{eq:hsum0ij5} together, we obtain
\begin{align}\label{eq:hsum0ijall}
 \Upsilon_{ii}^{ij} = 0,\,\,\, \Upsilon_{jj}^{ij} = 0,\,\,\, \Upsilon_{ij}^{ij} = 0,\,\,\, \sum_{r\neq i,j}\Upsilon_{ri}^{ij} = 0, \mbox{ and } \sum_{r\neq i,j}\Upsilon_{rj}^{ij}  = 0
\end{align}
It only remains to show that $\Upsilon_{r_0i}^{ij} = 0$ and $\Upsilon_{r_0j}^{ij} = 0$ when $r_0\neq i,j$. First note that by \eqref{eq:hsum_zero2},
\begin{align*}
 0 = \sum_{s\neq r_0} \Upsilon_{r_0s}^{ij} =  \Upsilon_{r_0i}^{ij} + \Upsilon_{r_0j}^{ij} + \sum_{s\neq r_0, i, j} \Upsilon_{r_0s}^{ij} = \Upsilon_{r_0i}^{ij} + \Upsilon_{r_0j}^{ij} + 0\\
 \implies \Upsilon_{r_0i}^{ij} + \Upsilon_{r_0j}^{ij} = 0 \numberthis \label{eq:hsum_r0ij}
\end{align*}
%
% Next, we re-write the third equation of \eqref{eq:hsum_zero2} as follows:
% \begin{align*}
%  0 = \sum_{r,s \neq r_0} \Upsilon_{rs}^{ij} = \sum_{r,s \neq r_0, i, j} \Upsilon_{rs}^{ij} + \Upsilon_{ii}^{ij} + \Upsilon_{jj}^{ij} + 2\Upsilon_{ij}^{ij} + 2 \sum_{r\neq r_0, i, j} \Upsilon_{ri}^{ij} + 2 \sum_{r\neq r_0, i, j} \Upsilon_{rj}^{ij}\\
%  = 0 + 0 + 0 + 0 + 2 \sum_{r\neq r_0, i, j} \Upsilon_{ri}^{ij} + 2 \sum_{r\neq r_0, i, j} \Upsilon_{rj}^{ij} \mbox{ (by \eqref{eq:hsum0ijall})}\\
%  \implies \sum_{r\neq r_0, i, j} \Upsilon_{ri}^{ij} + \sum_{r\neq r_0, i, j} \Upsilon_{rj}^{ij} = 0 \numberthis
% \end{align*}
%
The fourth and fifth equations of \eqref{eq:hsum0ijall} can be written as:
\begin{align}
 \label{eq:hsum_a}\Upsilon_{r_0i}^{ij} + \sum_{r\neq r_0, i, j} \Upsilon_{ri}^{ij} = 0\\
 \label{eq:hsum_b}\Upsilon_{r_0j}^{ij} + \sum_{r\neq r_0, i, j} \Upsilon_{rj}^{ij} = 0 
\end{align}
Now, consider the metric $\g_8$ where $f_i = c$, $f_j = \sqrt{2}c$, $f_{r_0} = \sqrt{3}c$, and $f_k = c$ for all $k\neq i, j, r_0$. Then $\Ric_{\g_8}(e_i, e_j) = 0$ implies
\begin{align*}
 \frac{2c^4 - (3c^2 - c^2)^2}{12c^4} 2\Upsilon_{r_0i}^{ij} + \frac{2c^4 - (3c^2 - 2c^2)^2}{24c^4} 2\Upsilon_{r_0j}^{ij} + \frac{2c^4 - (c^2 - c^2)^2}{4c^4} 2\sum_{r\neq r_0, i, j} \Upsilon_{ri}^{ij}\\
 + \frac{2c^4 - (c^2 - 2c^2)^2}{8c^4} 2\sum_{r\neq r_0, i, j} \Upsilon_{rj}^{ij} = 0\\
 \implies -\frac{1}{3} \Upsilon_{r_0i}^{ij} + \frac{1}{12} \Upsilon_{r_0j}^{ij} + \sum_{r\neq r_0, i, j} \Upsilon_{ri}^{ij} + \frac{1}{4} \sum_{r\neq r_0, i, j} \Upsilon_{rj}^{ij} = 0 \numberthis \label{eq:hsumr0ijall}
\end{align*}
Finally, solving equations \eqref{eq:hsum_r0ij}, \eqref{eq:hsum_a}, \eqref{eq:hsum_b} and \eqref{eq:hsumr0ijall}, we obtain that when $r_0 \neq i, j$,
\begin{align}\label{eq:hsumr0ir0j}
 \Upsilon_{r_0i}^{ij} =0, \Upsilon_{r_0j}^{ij} = 0, \sum_{r\neq r_0, i, j} \Upsilon_{ri}^{ij} = 0, \mbox{ and } \sum_{r\neq r_0, i, j} \Upsilon_{rj}^{ij} = 0
\end{align}
To conclude, by \eqref{eq:hsum_zero2}, \eqref{eq:hsumr0r1}, \eqref{eq:hsum0ijall} and \eqref{eq:hsumr0ir0j}  we see that $\Upsilon_{rs}^{ij} = 0$ for all $r, s$ whenever $i\neq j$. Therefore $\B$ is a nice basis.
\end{proof} 
As a result, we have:
\begin{proof}[Proof of Theorem \ref{mainthm:sRd_homogeneous}]
 A direct consequence of Proposition \ref{propn:Ric-diag} and Theorem \ref{thm:nice_stably_Ric_diag}.
\end{proof}
\begin{remark}
 The number of equations in Theorem \ref{mainthm:sRd_homogeneous} can be reduced significantly. Indeed, let
 \begin{align*}
  B(X, Y) = \sum_{\substack{e_\alpha \in \n_r\\e_\beta \in \n_s}} Q([e_\alpha, e_\beta], X) Q([e_\alpha, e_\beta], Y) \mbox{ for all } X, Y\in \n.
 \end{align*}
 Then, since $\Ad_g$ preserves $Q$ and the Lie brackets, it follows that $B$ is $\Ad(\H)$-invariant. Thus $B|_{\n_i} = \lambda_i Q|_{\n_i}$ for some constant $\lambda_i$ and Theorem \ref{mainthm:sRd_homogeneous} reduces to a single equation for each $i, j, r, s$. Similarly, if the basis $\B$ respects the equivalence between two modules $\n_i$ and $\n_j$ then Theorem \ref{mainthm:sRd_homogeneous} reduces to $1$, $2$, or $4$ equations depending on whether the representations are orthogonal, complex or quaternionic.
\end{remark}
\begin{remark}\label{rem:nice_lw}
 In \cite{lw13}, the authors define a nice basis for a Lie algebra as one that satisfies the following two conditions:
 \begin{enumerate}
  \item $[X_i , X_j]$ is always a scalar multiple of some element in the basis.
  \item Two different brackets $[X_i , X_j]$, $[X_r , X_s]$ can be a nonzero multiple of the same $X_k$ only if $\{i, j\}$ and $\{r, s\}$ are disjoint.
 \end{enumerate}
  In this article, we always assume the basis is orthonormal with respect to a bi-invariant metric, hence we have the additional skew-symmetry of the structure constants. As a result, condition (2) above follows from condition (1). Indeed, if $[X_i, X_j]$ and $[X_i, X_s]$ are both non-zero multiples of the same basis element $X_k$, then $\gamma_{ij}^k \neq 0$ and $\gamma_{is}^k \neq 0$. By skew-symmetry of the structure constants, this implies $\gamma_{ik}^j \neq 0$ and $\gamma_{ik}^s \neq 0$. Then if $j \neq s$, it would contradict condition (1).
\end{remark}
%%%
We will now address Ricci-diagonality for cohomogeneity one manifolds.  We have:

\begin{proposition}\label{propn:ric_homog}
 Let $M$ be a cohomogeneity one manifold with principal part $M^0 = \G/\H \times (0, L)$ and let $\g = \dd r^2 + \g_r$ be a diagonal $\G$-invariant metric on $M^0$. Let $\B$ be a $Q$-orthonormal basis for $\h^\perp$. Then $\Ric_\g^M$ is diagonal in the basis $\B' = \B\, \cup \{\ddr\}$ at the point $\gamma(r)$ if and only if $\Ric_{\g_r}^{\G/\H}$ is diagonal in the basis $\B$.
\end{proposition}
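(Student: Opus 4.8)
The plan is to read the components of $\Ric_\g^M$ at $\gamma(r)$ in the frame $\B' = \B \cup \{\ddr\}$ off the cohomogeneity one Ricci formula of \cite[Proposition 1.14]{gz02}. There are three types of entries to examine: $\Ric_\g^M(X_i^*, X_j^*)$ with $i \neq j$, the mixed entries $\Ric_\g^M(\ddr, X_i^*)$, and the entry $\Ric_\g^M(\ddr, \ddr)$, of which only the first two matter for diagonality since $\ddr$ is the last frame vector. Thus it suffices to establish (a) $\Ric_\g^M(X_i^*, X_j^*) = \Ric_{\g_r}^{\G/\H}(X_i^*, X_j^*)$ for all $i \neq j$, and (b) $\Ric_\g^M(\ddr, X_i^*) = 0$ for all $i$; granting these, $\Ric_\g^M$ is diagonal in $\B'$ at $\gamma(r)$ if and only if every entry in (a) vanishes, i.e. if and only if $\Ric_{\g_r}^{\G/\H}$ is diagonal in $\B$.

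For (a) I would write $\g = \dd r^2 + \sum_i f_i(r)^2\omega_i^2$, noting that the $\omega_i$ (dual to the Killing fields $X_i^*$) carry no $r$-dependence and that $[\ddr, X_i^*] = 0$ because $X_i^*$ is $\g$-Killing and $r$ is $\G$-invariant. Then the shape operator $S_r$ of the orbit through $\gamma(r)$ acts by $S_r X_i^* = (f_i'/f_i) X_i^*$, and combining $[\ddr, X_i^*] = 0$ with $\nabla_{\ddr}\ddr = 0$ one finds $R^M(X_i^*, \ddr)\ddr = -(f_i''/f_i) X_i^*$; in particular both $S_r$ and the radial curvature operator $X \mapsto R^M(X, \ddr)\ddr$ are diagonal in $\B$. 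The formula of \cite[Proposition 1.14]{gz02} expresses $\Ric_\g^M(X_i^*, X_j^*)$ as $\Ric_{\g_r}^{\G/\H}(X_i^*, X_j^*)$ plus terms assembled from $S_r$, $S_r^2$, $\operatorname{tr}S_r$ and the radial curvature operator --- the ``derivative terms'' alluded to just before \eqref{eqn:ric_homog_biinv} --- and since each of these is diagonal in $\B$ for a diagonal metric, none contributes when $i \neq j$. This gives (a).

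For (b) I would use that $\ddr$ is a unit geodesic field normal to the orbits, so the Codazzi equation gives $\Ric_\g^M(\ddr, X) = \dd(\operatorname{tr}S_r)(X) - (\operatorname{div}_{\g_r}S_r)(X)$ (up to a sign) for $X$ tangent to the orbit. Here $\operatorname{tr}S_r = \sum_j f_j'/f_j$ is $\G$-invariant, hence constant on the orbit, so the first term drops out. For the second I would set $E_j = X_j^*/f_j$, a $\g_r$-orthonormal frame of Killing fields of the orbit, and use $S_r E_j = \lambda_j E_j$ with $\lambda_j = f_j'/f_j$ constant on the orbit to reduce $(\operatorname{div}_{\g_r}S_r)(E_i)$ to $\sum_j(\lambda_j - \lambda_i)\langle\nabla_{E_j}E_j, E_i\rangle_{\g_r}$; the Koszul formula for Killing fields gives $\langle\nabla_{E_j}E_j, E_i\rangle_{\g_r} = \langle[E_j, E_i], E_j\rangle_{\g_r}$, which vanishes because $[X_j^*, X_i^*]|_{\gamma(r)}$ is a linear combination of the $X_k^*$ whose $X_j^*$-component is proportional to $\gamma_{ij}^j = Q([X_i, X_j], X_j) = 0$. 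Hence (b). (Alternatively, the vanishing of the mixed components can be extracted straight from \cite[Proposition 1.14]{gz02}.)

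I expect the main obstacle to be the bookkeeping in (b) --- putting the mixed Ricci component in divergence form and checking the divergence vanishes term by term --- while (a) is really just the observation, already implicit in the discussion before \eqref{eqn:ric_homog_biinv}, that every ``derivative term'' subtracted from the full Ricci formula is diagonal in $\B$ whenever the metric is, so that deleting those terms leaves the off-diagonal part untouched.
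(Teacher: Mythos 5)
Your proposal follows essentially the same path as the paper: read the three types of entries of $\Ric_\g^M$ at $\gamma(r)$ off the formula in \cite[Proposition 1.14]{gz02}, observe that the shape operator and radial-curvature contributions are diagonal for a diagonal metric so they drop out of the off-diagonal tangential entries (giving $\Ric_\g^M(X_i^*, X_j^*) = \Ric_{\g_r}^{\G/\H}(X_i^*, X_j^*)$ for $i\neq j$), and note that the mixed entries $\Ric_\g^M(\ddr, X_i^*)$ vanish. The paper organizes the tangential step by splitting into the cases of $e_i$, $e_j$ lying in the same versus distinct $\H$-modules (in the former case the extra term is multiplied by $Q(e_1, e_2) = 0$), but the content is identical to your observation.

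The one place to be careful is your Codazzi derivation of $\Ric_\g^M(\ddr, X_i^*) = 0$, which the paper instead asserts directly from the formula in \cite[Proposition 1.14]{gz02}. Your reduction of $(\operatorname{div}_{\g_r}S_r)(E_i)$ to $\sum_j(\lambda_j - \lambda_i)\langle\nabla_{E_j}E_j, E_i\rangle_{\g_r}$ uses $S_r E_j = \lambda_j E_j$ as though it were an equation of vector fields on the whole orbit; in fact it holds only at the base point $\gamma(r)$, since away from $\gamma$ the orbit metric is not diagonal in the Killing frame $\{X_j^*\}$ and neither is $S_r$. Expanding $(\operatorname{div}_{\g_r}S_r)(E_i)$ correctly at $\gamma(r)$ one gets $\sum_j E_j\langle S_r E_j, E_i\rangle_{\g_r} - \lambda_i\sum_j\langle\nabla_{E_j}E_j, E_i\rangle_{\g_r}$ (using $\langle S_r E_j, \nabla_{E_j}E_i\rangle = 0$ by skew-symmetry of $\nabla E_i$), and each summand vanishes by the same computation you indicate: differentiating $\g_r(X_j^*, X_i^*)$ or $\g_r(S_r X_j^*, X_i^*)$ along $X_j^*$ at $[\H]$ via $\Ad$ produces a factor of $\gamma_{ji}^j$, which is zero. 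So the conclusion stands and the bookkeeping you flagged as the main obstacle is indeed the point requiring care; the fallback you mention of reading the mixed component off the formula is exactly what the paper does.
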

\begin{proof}
 We can use the formulae for $\Ric_\g$ from \cite[Proposition 1.14]{gz02}.
 We will determine when the off-diagonal terms of $\Ric_\g$ are zero. Firstly, we note that for any diagonal metric $\g$, $\Ric_\g(\ddr, X) = 0$ for any $X$ tangent to the orbit $\G/\H$.
 
 Since the second fundamental form is diagonal, we have
 \begin{align*}
  \displaystyle \Ric_\g(e_i, e_j) = \Ric_{\g_r}(e_i, e_j)
 \end{align*}
 when $e_i\in \n_i$ and $e_j\in \n_j$ distinct $\H$-modules. Therefore, $\Ric_\g(e_i, e_j) = 0$ at $\gamma(r)$ if and only if $\Ric_{\g_r}(e_i, e_j) = 0$.

 If $e_1$, $e_2$ belong to the same $\H$-module $\n_i$, then
\begin{align*}
 \Ric_\g(e_1, e_2) &= \Ric_{\g_r}(e_1, e_2) + \left\{ \frac{-f_i'}{f_i}\sum_s \frac{f_s'}{f_s}\dim \n_s +  \frac{f_i'^2}{f_i^2} - \frac{f_i''}{f_i} \right\} f_i^2 Q(e_1, e_2)
\end{align*}
Thus $\Ric_\g(e_1, e_2) = 0$ if and only if $\Ric_{\g_r}(e_1, e_2) = 0$.
\end{proof}
%%%
Thus, for cohomogeneity one manifolds, the problem is reduced to understanding the stably Ricci-diagonal condition for a homogeneous metric on $\G/\H$, treated previously in this section.
%%%
\section{Examples}\label{sec:examples}
In the first part of this section, we examine some standard bases of semisimple Lie algebras (of compact type) to see if any of them are nice (and consequently stably Ricci-diagonal).
\subsection{$\mathfrak{so}(n)$} A basis for the Lie algebra $\mathfrak{so}(n)$ is given by $\{E_{ij}\}_{1\leq i<j\leq n}$. Recall that $E_{ij}$ is defined to be the $n\times n$ matrix with a $1$ in the $(i,j)$ entry, a $-1$ in the $(j,i)$ entry, and $0$ in all other entries. It is easy to see that the brackets are given by
 \begin{align*}
  [E_{ij}, E_{kl}] = \left\{
	\begin{array}{ll}
		0  & \mbox{if } \{i,j\} \cap \{k,l\} = \emptyset \\
		0 & \mbox{if } \{i,j\} = \{k,l\} \\
		E_{il} & \mbox{if } j = k \mbox{ and } i\neq l
	\end{array}
     \right.
 \end{align*}
The Lie bracket skew-symmetry and the fact that $E_{ji} = -E_{ij}$, yield all remaining brackets. From this it is clear that $\{E_{ij}\}_{i<j}$ is a nice basis and hence stably Ricci-diagonal.
%%%
% \subsection{$\mathfrak{u}(n)$}
% A basis for the Lie algebra $\mathfrak{u}(n)$ is given by $\B = \{E_{pq}\}_{1\leq p<q\leq n}\cup\{F_{pq}\}_{1\leq p<q\leq n}\cup \{H_l\}_{1\leq l\leq n}$, where
% \begin{itemize}
%  \item $E_{pq}$ is as defined earlier.
%  \item $F_{pq}$ is the matrix with $i$ in the $(p,q)$ and  $(q,p)$ entries and $0$ in all other entries.
%  \item $H_l$ is the matrix with an $i$ in the $(l,l)$ entry and $0$ in the other entries.
% \end{itemize}
% Then $\B$ is not a nice basis since, for example, $[E_{12}, F_{12}] = \diag(i, -i, 0, \cdots, 0) = H_1 - H_2$.
%%%
\subsection{$\mathfrak{su}(n)$ with $n\geq 3$}
A basis for $\mathfrak{su}(n)$ is given by $\B = \{E_{pq}\}_{1\leq p<q\leq n}\cup\{F_{pq}\}_{1\leq p<q\leq n}\cup \{G_l\}_{1\leq l < n}$, where
\begin{itemize}
 \item $E_{pq}$ is as defined earlier.
 \item $F_{pq}$ is the matrix with $i$ in the $(p,q)$ and  $(q,p)$ entries and $0$ in all other entries.
 \item $G_l$ is the matrix with an $i$ in the $(l,l)$ entry, $-i$ in the $(l+1, l+1)$ entry, and $0$ in the other entries.
\end{itemize}
 Then this basis is not nice, since, for example, $[E_{13}, F_{13}] = G_1 + G_2$. But $\B$ is also not orthogonal w.r.t. bi-invariant metric! However, note that any basis $\B$ for $\mathfrak{su}(n)$ that contains $\{E_{pq}\}\cup\{F_{pq}\}$ cannot be a nice basis. Indeed, the set of brackets of basis elements must then contain the $^n C_2$ linearly independent elements $\{G_i\}_{i<j}$, and at most $n-1$ of these brackets can also be basis elements.
%%%
\subsection{$\mathfrak{su}(2)$} Consider the basis $\B = \{E_{12}, F_{12}, G_1\}$ where $E_{12}$, $F_{12}$, and $G_1$ are as above. Then $[E_{12}, F_{12}] = 2G_1$, $[F_{12}, G_1] = 2E_{12}$, and $[G_1, E_{12}] = 2F_{12}$, so $\B$ is a nice basis.
%%%
\subsection{$\mathfrak{sp}(n)$ with $n\geq 2$}
A basis for the Lie algebra $\mathfrak{sp}(n)$ is given by $\B = \{E_{pq}\}_{1\leq p<q\leq n}\cup\{F_{pq}\}_{1\leq p<q\leq n}\cup\{Y_{pq}\}_{1\leq p<q\leq n}\cup\{Z_{pq}\}_{1\leq p<q\leq n}\cup \{H_l\}_{1\leq l\leq n}\cup \{S_l\}_{1\leq l\leq n}\cup \{T_l\}_{1\leq l\leq n}$, where
\begin{itemize}
 \item $Y_{pq}$ is the matrix with $j$ in the $(p,q)$ and  $(q,p)$ entries and $0$ in all other entries.
 \item $Z_{pq}$ is the matrix with $k$ in the $(p,q)$ and  $(q,p)$ entries and $0$ in all other entries.
 \item $H_l$ is the matrix with an $i$ in the $(l,l)$ entry and $0$ in the other entries.
 \item $S_l$ is the matrix with an $j$ in the $(l,l)$ entry and $0$ in the other entries.
 \item $T_l$ is the matrix with an $k$ in the $(l,l)$ entry and $0$ in the other entries.
 \item $E_{pq}$ and $F_{pq}$ are as defined earlier.
\end{itemize}
Then $\B$ is orthonormal with respect to the bi-invariant metric, but it is not a nice basis either, since $[E_{12}, F_{12}] = \diag(i, -i, 0, \cdots, 0) = H_1 - H_2$.
\subsection{$\gg_2$} A basis for the Lie algebra $\gg_2\subset \mathfrak{so}(7)$ is given by
\begin{align*}
 X_1 = E_{12} - E_{47},\,\, X_2 = E_{12}+ E_{56},\,\, X_3 = E_{14} + E_{27},\,\, X_4 = E_{14} - E_{36},\,\, X_5 = E_{16} + E_{25}, \\ X_6 = E_{34} + E_{16},\,\,
 X_7 = E_{13} + E_{46},\,\, X_8 = E_{13} + E_{57},\,\, X_9 = E_{15} - E_{26},\,\,
 X_{10} = E_{15} - E_{37}, \\
 X_{11} = E_{17} - E_{24},\,\,
 X_{12} = E_{17} + E_{35},\,\,
 X_{13} = E_{23} + E_{67},\,\,
 X_{14} = E_{45} + E_{67}
\end{align*}
Then, $[X_1, X_6] = X_9 - X_{10}$, so this is not a nice basis. But it is also not orthonormal.\\
\newline
Thus, the basis $\{E_{ij}\}$ for $\mathfrak{so}(n)$ is the only nice basis amongst the above examples. It would be an interesting question for future investigation to find whether the compact semisimple Lie algebras apart from $\mathfrak{so}(n)$ do admit nice bases, and if so, to describe them.\\

We end this section by providing some examples to illustrate the nice basis condition for homogeneous spaces.
\subsection{$\SU(3)/\mathsf{T^2}$} Here $\mathsf{T^2}$ is the 2-torus embedded as $\diag(z, w, \overline {zw})$. Therefore, $\h = \Span\{G_1, G_2\}$ and $\h^\perp = \Span\{E_{12}, E_{13}, E_{23}, F_{12}, F_{13}, F_{23}\}$. Under the adjoint action of $\H = \mathsf{T^2}$, $\h^\perp$ splits as $\h^\perp = \n_1 \oplus \n_2 \oplus \n_3$, where $\n_1 = \Span\{E_{12}, F_{12}\}$, $\n_2 = \Span\{E_{13}, F_{13}\}$, and $\n_3 = \Span\{E_{23}, F_{23}\}$. The action of the element $\diag(z, w, \overline {zw})$ on $\n_1$, $\n_2$, and $\n_3$ (each considered as a copy of $\C$) is by multiplication by $z\overline{w}$, $z^2w$, and $zw^2$ respectively. Thus in this case, $\h^\perp$ is the sum of three inequivalent modules, and hence the nice basis condition is satisfied vaccuously, and the given basis is stably Ricci-diagonal.
\subsection{$\SU(3)/\mathsf{S^1}$} Here $\mathsf{S^1}$ is embedded as $\diag(z, 1, \overline{z})$. Therefore $\h = \Span\{G_1 + G_2\}$ and $\h^\perp = \Span\{E_{12}, E_{13}, E_{23}, F_{12}, F_{13}, F_{23}, G_1 - 2G_2\}$. Under the $\H$-action, we have $\h^\perp = \n_1 \oplus \n_2 \oplus \n_3 \oplus \n_4$, where $\n_1 = \Span\{E_{12}, F_{12}\}$, $\n_2 = \Span\{E_{13}, F_{13}\}$, $\n_3 = \Span\{E_{23}, F_{23}\}$, and $\n_4 = \Span\{G_1 - 2G_2\}$. Then $\n_1$ and $\n_3$ are equivalent modules where $\diag(z, 1, \overline{z})$ acts as multiplication by $z$, while the action on $\n_2$ is by multiplication by $z^2$, and $\n_4$ is a trivial module. As per the nice basis condition, we examine the quantity $\displaystyle\sum_{\substack{e_\alpha\in n_r\\ e_\beta\in \n_s}}\gamma_{\alpha\beta}^i\gamma_{\alpha\beta}^j$ for each $r, s$, and $e_i\in \n_1$, $e_j \in \n_3$, for example we take $e_i = E_{12}\in\n_1$ and $e_j = E_{23}\in\n_3$. It is easy to calculate and see that this quantity equals zero for each pair $r, s$. Hence the given basis for $\h^\perp$ is a nice basis (and hence stably Ricci-diagonal).
%%%
%%%
\section{Group diagrams involving standard basis of $\mathfrak{so}(n)$}\label{sec:SO_grp_diagram}
In this section, we restrict our attention to group diagrams where the groups $\G, \K_\pm, \H$ are all standard block embeddings of $\SO(k)$ or products of $\SO(k)$'s in $\SO(n)$. For this class of cohomogeneity one manifolds, there exists a nice (and hence stably Ricci-diagonal) basis adapted to the group diagram, namely a basis consisting of $E_{ij}$'s. The main result of this section is the proof of Theorem \ref{mainthm:RF_SO}. The proof proceeds via the observation that diagonal metrics on such a manifold have more symmetries than general $\SO(n)$-invariant metrics.
%%%
\begin{proposition}
\label{propn:conj_iso}
 Suppose $M$ is a cohomogeneity one manifold with group diagram where $\G = \SO(n)$ and $\H, \K_\pm$ are block embeddings of products of $\SO(k)$'s. Let $\B$ be a basis for $\h^\perp = \n$ consisting entirely of $E_{ij}$'s. Let $\g$ be a metric on $M$ that is diagonal in the basis $\B' = \B\cup \{ \ddr \}$. Then for each diagonal matrix $A \in \O(n)$, there is a map $\Phi_A : M\rightarrow M$ that is an isometry of $\g$.
\end{proposition}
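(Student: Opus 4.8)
The plan is to construct $\Phi_A$ as the map induced on $M$ by conjugation by $A$, exploiting that $A$ normalizes the relevant subgroups. Since $\G = \SO(n)$ and $A \in \O(n)$ is diagonal (so its entries are $\pm 1$), conjugation $c_A : g \mapsto A g A^{-1}$ is an automorphism of $\SO(n)$. Because $\H$ and $\K_\pm$ are block embeddings of products of $\SO(k)$'s (i.e. determined by a partition of $\{1,\dots,n\}$ into blocks), and conjugation by a diagonal $\pm 1$ matrix preserves each such block subgroup setwise, we have $c_A(\H) = \H$ and $c_A(\K_\pm) = \K_\pm$. Hence $c_A$ descends to well-defined maps $\G/\H \to \G/\H$ and $\G/\K_\pm \to \G/\K_\pm$, and since it is compatible with the inclusions $\H \subset \K_\pm \subset \G$ it patches together (using the disk-bundle decomposition $M = \G\times_{\K_-}D_-\cup \G\times_{\K_+}D_+$, with $c_A$ acting trivially on the disk factors and on the $r$-coordinate) to give a diffeomorphism $\Phi_A : M \to M$. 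Concretely, along the geodesic $\gamma$, $\Phi_A$ fixes $\gamma(r)$ for each $r$ and acts on the tangent space $\h^\perp \oplus \vspan\{\ddr\}$ by $\Ad(A)$ on $\h^\perp$ and the identity on $\ddr$.

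Next I would show $\Phi_A$ is an isometry of $\g$. Since $\g$ is $\G$-invariant and $\Phi_A$ is equivariant with respect to $c_A$ (that is, $\Phi_A(g\cdot x) = c_A(g)\cdot \Phi_A(x)$), it suffices to check that $\Phi_A$ preserves $\g$ at the single point $\gamma(r)$ for each $r$; invariance then propagates the isometry property over all of $M^0$, and by continuity over $M$. At $\gamma(r)$, the differential of $\Phi_A$ is $\Ad(A) \oplus \mathrm{id}$ on $\h^\perp \oplus \vspan\{\ddr\}$. Now the key point is that $\Ad(A) = c_A$ acts on the standard basis $\{E_{ij}\}$ by a permutation with signs: $A E_{ij} A^{-1} = \varepsilon_i \varepsilon_j E_{ij}$ where $\varepsilon_k = \pm 1$ are the diagonal entries of $A$. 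In particular $\Ad(A)$ sends each basis vector $E_{ij}$ to a scalar multiple ($\pm 1$) of itself, hence permutes the lines $\R E_{ij}$ trivially. Therefore $\Ad(A)$ preserves every diagonal metric $\g_r = \sum f_i(r)^2 \omega_i^2$ on $\h^\perp$ — it acts as a diagonal orthogonal transformation in the basis $\B$ — and fixes $\dd r^2$. Thus $d\Phi_A$ is a $\g$-isometry at each $\gamma(r)$, and $\Phi_A$ is a global isometry of $\g$.

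The only genuine subtlety — the step I expect to require the most care — is verifying that $c_A$ really does preserve $\H$ and $\K_\pm$ and hence descends to a well-defined self-map of $M$ respecting the gluing. This hinges on the hypothesis that these isotropy groups are \emph{block} embeddings of products of $\SO(k)$'s: a diagonal $\pm 1$ matrix $A$ commutes with such a block subgroup up to the subgroup itself (conjugation by $A$ acts within each $\SO(k)$ block as conjugation by a diagonal $\pm 1$ matrix, which lands back in $\SO(k)$), so $c_A$ restricts to an automorphism of $\H$ and of $\K_\pm$. One must also check compatibility with the sphere actions $\K_\pm/\H = S^{l_\pm}$ so that $\Phi_A$ extends smoothly across the singular orbits; but since $c_A$ is a Lie group automorphism fixing the relevant subgroup pair, it induces an equivariant diffeomorphism of each slice representation, which extends over the disks $D_\pm$. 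Once this bookkeeping is done, the isometry statement follows from the structure-constant computation above, which is immediate from the explicit brackets of the $E_{ij}$ recorded in Section~\ref{sec:examples}.
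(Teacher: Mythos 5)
Your proof is correct and takes essentially the same approach as the paper: define $\Phi_A$ via the conjugation automorphism $\phi_A=c_A$ on $\G$, which preserves $\H$ and $\K_\pm$ because they are block embeddings and $A$ is diagonal, and then observe that $\Ad(A)$ sends each $E_{ij}$ to $\pm E_{ij}$ so the induced diffeomorphism preserves any metric diagonal in $\B'$.
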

\begin{proof} 
 We first observe that any isomorphism $\phi : \G \rightarrow \G$ such that $\phi(\K_\pm)\subset \K_\pm$ and $\phi(\H)\subset\H$ induces a diffeomorphism $\Phi$ of $M$. Indeed, fix a geodesic $\gamma$ for which the stabilizer groups are $\K_\pm$, $\H$. Any $p\in M$ is of the form $p = g_p\cdot \gamma(r)$ for some $r$ and $g_p\in\G$. Define $\Phi(g\cdot\gamma(r)) = \phi(g)\cdot\gamma(r)$. Then $\Phi$ is well-defined since $\Phi(gh\cdot\gamma(r)) = \phi(g)\phi(h)\cdot\gamma(r) = \phi(g)\cdot\gamma(r)$ for $0<r<L$ and similarly for $r = 0, L$. If $\dd\Phi$ preserves $\g$ at all points $\gamma(r)$ with $0<r<L$, then $\Phi$ is an isometry on the regular part of $M$ since $\G$ acts by isometries as well.
 
 In our case, we define $\phi_A : \G\rightarrow\G$ as conjugation by a diagonal element $A$ in $\O(n)$. Then $\phi_A$ preserves the groups $\K_\pm$, $\H$ and takes a basis vector $E_{ij}\in\B$ into $\pm E_{ij}$ and hence the induced diffeomorphism $\Phi_A :M\rightarrow M$ is an isometry in the diagonal metric.
\end{proof}
%%%
\begin{proposition}
\label{propn:inv_diag}
 Let $V$ be a subspace of $\mathfrak{so}(n)$ spanned by a subset of the $E_{ij}$'s. If $\g$ is a metric on $V$ which is invariant under $\Ad_A$ for all diagonal $A\in\O(n)$ then $\g$ is diagonal in the standard basis consisting of $E_{ij}$'s.
\end{proposition}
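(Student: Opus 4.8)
The plan is to diagonalize $\g$ by exhibiting, for each pair of distinct basis vectors $E_{ij}, E_{kl}$ spanning $V$, a single diagonal matrix $A\in\O(n)$ whose adjoint action fixes one of the two vectors and negates the other; invariance of $\g$ under $\Ad_A$ then forces $\g(E_{ij}, E_{kl}) = -\g(E_{ij}, E_{kl}) = 0$. Recall that conjugating $E_{ij}$ by $A = \diag(\varepsilon_1,\dots,\varepsilon_n)$ with $\varepsilon_m = \pm 1$ multiplies it by $\varepsilon_i\varepsilon_j$, since the $(i,j)$ and $(j,i)$ entries pick up $\varepsilon_i\varepsilon_j$. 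So the question reduces to the following combinatorial fact: given two distinct two-element subsets $\{i,j\}$ and $\{k,l\}$ of $\{1,\dots,n\}$, there is a choice of signs $\varepsilon_m$ with $\varepsilon_i\varepsilon_j = 1$ and $\varepsilon_k\varepsilon_l = -1$ (or vice versa). This is elementary: if the two pairs are disjoint, set $\varepsilon_i = \varepsilon_j = 1$, $\varepsilon_k = 1$, $\varepsilon_l = -1$, and all other signs $1$. If they share exactly one index, say $j = k$, then the pairs are $\{i,j\}$ and $\{j,l\}$ with $i,j,l$ distinct; set $\varepsilon_j = 1$, $\varepsilon_i = 1$, $\varepsilon_l = -1$, all others $1$, giving $\varepsilon_i\varepsilon_j = 1$ and $\varepsilon_j\varepsilon_l = -1$. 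Since $\{i,j\}\neq\{k,l\}$, these are the only two cases.

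Concretely, I would fix an enumeration $E_{i_1 j_1}, \dots, E_{i_N j_N}$ of the $E_{ij}$'s spanning $V$, take any two distinct indices $a\neq b$ in $\{1,\dots,N\}$, apply the combinatorial observation above to the pairs $\{i_a, j_a\}$ and $\{i_b, j_b\}$ to produce a diagonal $A\in\O(n)$, and compute
\begin{align*}
 \g(E_{i_a j_a}, E_{i_b j_b}) = \g(\Ad_A E_{i_a j_a}, \Ad_A E_{i_b j_b}) = (\varepsilon_{i_a}\varepsilon_{j_a})(\varepsilon_{i_b}\varepsilon_{j_b})\,\g(E_{i_a j_a}, E_{i_b j_b}) = -\g(E_{i_a j_a}, E_{i_b j_b}),
\end{align*}
using $\Ad_A$-invariance of $\g$ in the first equality and the choice of signs in the last. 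Hence $\g(E_{i_a j_a}, E_{i_b j_b}) = 0$ for all $a\neq b$, which is precisely the statement that $\g$ is diagonal in the basis $\{E_{ij}\}$.

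There is no serious obstacle here; the only point requiring a moment's care is the case analysis ensuring the sign pattern exists whenever $\{i,j\}\neq\{k,l\}$, and noting that $\Ad_A$ genuinely preserves the span $V$ (it permutes the $E_{ij}$'s up to sign, so it sends $V$ to itself and the hypothesis of $\Ad_A$-invariance of $\g$ on $V$ makes sense). Everything else is a one-line computation. This is exactly the mechanism already used in the proof of Proposition \ref{propn:conj_iso}, now applied internally to the inner product rather than to the manifold.
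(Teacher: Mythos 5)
Your proof is correct and follows essentially the same route as the paper's: find, for each pair of basis elements $E_{ij} \neq E_{kl}$, a diagonal $A \in \O(n)$ that fixes one and negates the other (via the same two-case analysis on whether the index sets are disjoint or share one index), and conclude $\g(E_{ij}, E_{kl}) = -\g(E_{ij}, E_{kl}) = 0$ by invariance. The only differences are cosmetic (a slightly different choice of sign pattern and the explicit remark that $\Ad_A$ preserves $V$, which the paper leaves implicit).
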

\begin{proof}
 For each pair of linearly independent elements $E_{ij}, E_{kl} \in \mathfrak{so}(n)$, there exists an element $A \in \O(n)$ such that $\Ad_A E_{ij} = E_{ij}$ and $\Ad_A E_{kl} = -E_{kl}$. Indeed, if $\{i,j\} \cap \{k,l\} = \emptyset$ then we can take $A$ to be the diagonal matrix with a $-1$ in the $(k,k)$ entry and $1$'s in the other diagonal entries. If $\{i,j\}\cap \{k,l\} = \{i\}$ then without loss of generality we are considering $E_{ij}$ and $E_{il}$, and we can take $A$ to be the diagonal matrix with $-1$ in the $(i,i)$ and $(j,j)$ entries and $1$'s in the other diagonal entries.
 
Then, invariance of the metric under $\Ad_A$ implies $\g(E_{ij}, E_{kl}) = 0$, since
 \begin{align*}
  \g(E_{ij}, E_{kl}) = \g(\Ad_A E_{ij}, \Ad_A E_{kl}) = \g(E_{ij}, -E_{kl}) = -\g(E_{ij}, E_{kl}).
 \end{align*}
\end{proof}
%%%
We are now ready to prove Theorem \ref{mainthm:RF_SO}.
\begin{proof}[Proof of Theorem \ref{mainthm:RF_SO}]
 Let $\g_0$ be a diagonal metric on $M$. By Proposition \ref{propn:conj_iso}, each diagonal matrix $A\in\O(n)$ yields an additional isometry $\Phi_A$ of $(M, \g_0)$.
 
 Since isometries are preserved under the Ricci flow \cite{ha82, ko10}, each $\Phi_A$ is an isometry of $(M, \g(t))$ as well. Now by Proposition \ref{propn:inv_diag}, any metric invariant under all of the $\Phi_A$'s must be diagonal. Thus $\g(t)$ is diagonal for each $t>0$ as well.
\end{proof}
In fact, the conclusion of Theorem \ref{mainthm:RF_SO} is also true for a slightly larger class of cohomogeneity one group diagrams.
\begin{theorem}
\label{thm:rf_diag_pres_so_disconnected}
 Suppose the group diagram of the cohomogeneity one manifold $M$ is such that
 \begin{enumerate}
  \item $\G = \SO(n)$
  \item $\H^0, \K_\pm^0$ are block embeddings of products of $\SO(k)$'s
  \item $\H \cong \H^0\rtimes B$, $\K_\pm \cong \K_\pm^0 \rtimes B$ where $B$ is a finite subgroup of $\G$ and $B$ acts on $\H^0$ or $\K_\pm^0$ by conjugation, i.e. $B$ is in the normalizer of $\H^0, \K_\pm^0$.
 \end{enumerate}
Then the diagonality of metrics with respect to the basis $\{E_{ij}\}$ is preserved under the Ricci flow.
\end{theorem}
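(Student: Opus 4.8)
The plan is to deduce this from Theorem \ref{mainthm:RF_SO} by passing to the finite normal covering $\tilde M \to M$ whose group diagram is the \emph{connected} one $\H^0 \subset \K^0_\pm \subset \SO(n)$. The elementary but crucial observation is that, since $B$ is finite, $\H$ and $\H^0$ have the same Lie algebra $\h$, and likewise $\k_\pm = \k^0_\pm$; hence $\n = \h^\perp$, the adapted basis $\B \subseteq \{E_{ij}\}$, and the decompositions $\h^\perp = \m_\pm\oplus\p_\pm$ are literally the same for $M$ and for $\tilde M$. In particular ``diagonal with respect to $\B'$'' means the same thing upstairs and downstairs.

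First I would set up $\tilde M$. From hypothesis (3) we have $\H^0\cap B = \K^0_\pm\cap B = \{e\}$, hence $\H^0 B\cap\K^0_\pm = \H^0$, and therefore the natural map $\K^0_\pm/\H^0\to\K_\pm/\H$ is a bijection onto the sphere $S^{l_\pm}$; so $\tilde M = \SO(n)\times_{\K^0_-}D_-\cup\SO(n)\times_{\K^0_+}D_+$ is built from the same discs $D_\pm$ with $\K^0_\pm$ acting by restriction, and by hypothesis (2) it is exactly the kind of cohomogeneity one manifold to which Theorem \ref{mainthm:RF_SO} applies. Next, $B$ acts on $\tilde M$ by $b\cdot[g,d] = [gb^{-1},\,b\cdot d]$ on each disc bundle: this is well defined because $B$ normalizes $\K^0_\pm$, it commutes with the left $\SO(n)$-action, and it is free because $\K^0_\pm\cap B = \{e\}$; and the identity map on representatives descends to an $\SO(n)$-equivariant diffeomorphism $\tilde M/B\cong M$. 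Thus $p\colon\tilde M\to M$ is a finite normal Riemannian covering for any $B$-invariant metric on $\tilde M$, and $\tilde M$ is closed.

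Given this, the argument runs as follows. Let $\g_0$ be diagonal with respect to $\B'$ on $M$; its pullback $\tilde\g_0 = p^*\g_0$ is $\SO(n)$-invariant, is $B$-invariant (because $p\circ b = p$), and is again diagonal with respect to $\B'$, since at the base point the differential of $p\colon\SO(n)/\H^0\to\SO(n)/\H$ is the identity of $\h^\perp$ and the $\dd r^2$ term is untouched. By Theorem \ref{mainthm:RF_SO}, the Ricci flow $\tilde\g(t)$ on $\tilde M$ stays diagonal with respect to $\B'$; since $B$ acts by isometries of $\tilde\g_0$ and the Ricci flow preserves isometries, $\tilde\g(t)$ stays $B$-invariant, hence descends to a metric $\g'(t)$ on $M$. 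Because $p$ is a local isometry, $\g'(t)$ solves \eqref{eq:RF} with $\g'(0)=\g_0$, and $\g'(t)$ is diagonal with respect to $\B'$ because $\dd p$ carries the basis $\B'$ on $\tilde M$ to $\B'$ on $M$. By uniqueness of the Ricci flow on the closed manifold $M$ we get $\g'(t)=\g(t)$, which proves the theorem.

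The step I expect to be the main obstacle is the covering-space bookkeeping in the second paragraph: verifying that the right $B$-action extends smoothly and freely across both disc bundles, that $\tilde M/B$ really is $\SO(n)$-equivariantly diffeomorphic to $M$, and that $\dd p$ genuinely identifies $\B'$ on $\tilde M$ with $\B'$ on $M$ so that $\B'$-diagonality transfers in both directions; this is where hypotheses (1)--(3) are all used, and it is easy to be careless about the singular orbits. As an alternative I would instead extend the proofs of Propositions \ref{propn:conj_iso} and \ref{propn:inv_diag} directly: conjugation $\Phi_A$ by a diagonal $A\in\O(n)$ that also normalizes $B$ is still an isometry of every $\B'$-diagonal metric, and any pair of basis elements that such conjugations fail to separate lies in inequivalent $\Ad(\H^0)$-submodules of $\h^\perp$ and is therefore already forced orthogonal by Schur's lemma. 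I expect the covering argument to be the cleaner one to write out.
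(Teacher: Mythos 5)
Your proof is correct and takes essentially the same route as the paper: lift the diagonal metric to the finite normal cover $\widetilde{M}$ with connected group diagram $\H^0\subset\K^0_\pm\subset\G$, apply Theorem~\ref{mainthm:RF_SO} there, use preservation of isometries under Ricci flow to descend the evolved metric, and conclude by uniqueness of the flow on $M$. The paper's version is terser (it merely asserts that $M=\widetilde M/B$ and that diagonal metrics lift to diagonal $B$-invariant metrics); your more explicit covering-space bookkeeping fills in details the paper leaves implicit.
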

\begin{proof}
 In the above situation, $M$ is the quotient by the right action of $B$ on the manifold $\widetilde{M}$ whose group diagram is $\H^0\subseteq \K_\pm^0\subseteq \G$. Therefore, any cohomogeneity one metric $\g$ on $M$ can be lifted to a cohomogeneity one metric $\tilde{\g}$ on $\widetilde{M}$ such that (at points on $\gamma$) $\tilde{\g}$ is invariant under the conjugation action by $B$. If $\g$ is diagonal then so is $\tilde{\g}$.
 
 Evolve the (diagonal) metric $\tilde{\g}$ via the Ricci flow. By Theorem
 \ref{mainthm:RF_SO}, the evolving metric $\tilde{\g}(t)$ on $M$ is diagonal. Since isometries are preserved under the Ricci flow, the (diagonal) evolving metric $\tilde{\g}(t)$ remains invariant under conjugation by $B$, and hence descends to a diagonal metric $\g(t)$ on $M$, such that $\g(t)$ also satisfies the Ricci flow equation with initial metric $\g$. By uniqueness of solutions to the Ricci flow, this shows that the metric on $M$ remains diagonal under the Ricci flow.
\end{proof}

The arguments above relied on the choice of basis elements $\{E_{ij}\}$ for the Lie algebras of $\H$, $\K$ and $\G$, which was made possible by assuming these groups were block embeddings of products of $\SO(k)$. Below we make the observation that if we want to work with connected Lie groups having basis consisting of some number of standard basis elements $\{E_{ij}\}$ in $\mathfrak{so}(n)$, then the only possibility for the groups $\H$ and $\K$ are block embeddings of products of $\SO(k)$'s.
\begin{proposition}
 Let $\k$ be a subalgebra of $\mathfrak{so}(n)$ that is generated (as an $\R$-vector space) by some subset $S$ of the $E_{ij}$'s. Then $\k$ is the direct sum of subalgebras of the form $\mathfrak{so}(k)$ in block embedding and hence $\K$ is a product of $\SO(k)$'s in block embedding.
\end{proposition}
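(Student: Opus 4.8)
The plan is to analyze the combinatorial structure forced by the hypothesis that $\k = \vspan_\R(S)$ is closed under the Lie bracket. The bracket relations recalled for $\mathfrak{so}(n)$ say that $[E_{ij}, E_{jl}] = E_{il}$ whenever $i,j,l$ are distinct, and that brackets of $E_{ij}$ with $E_{kl}$ vanish when $\{i,j\}$ and $\{k,l\}$ are disjoint or equal. I would introduce the graph $\Gamma$ on vertex set $\{1,\dots,n\}$ whose edges are the pairs $\{i,j\}$ with $E_{ij} \in S$ (after first discarding any vertex that appears in no edge). The key claim is then: \emph{closure of $\vspan_\R(S)$ under bracket forces $\Gamma$ to be a disjoint union of complete graphs.} Granting this, a complete graph on a vertex subset $\{i_1,\dots,i_k\}$ corresponds precisely to the block-embedded $\mathfrak{so}(k)$ on those coordinates, the blocks sit on disjoint coordinate sets hence commute, and so $\k$ is the direct sum of these block $\mathfrak{so}(k)$'s; exponentiating gives that $\K$ is the corresponding product of block-embedded $\SO(k)$'s.

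To prove the claim, I would argue that $\Gamma$ is \emph{closed under transitivity of adjacency within a connected component}: if $\{i,j\}$ and $\{j,l\}$ are edges with $i \neq l$, then $[E_{ij}, E_{jl}] = E_{il} \in \vspan_\R(S)$, and since distinct $E_{ab}$ (up to sign) are linearly independent, this forces $E_{il} \in S$, i.e. $\{i,l\}$ is an edge. Thus within each connected component of $\Gamma$, any two vertices joined by a path of length $2$ are already joined by an edge; a straightforward induction on path length then shows every two vertices in a component are adjacent, so each component is a complete graph. I would also note the degenerate cases: a component which is a single edge $\{i,j\}$ gives $\mathfrak{so}(2)$, and the whole argument is vacuous if $S$ is empty.

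The main obstacle — really the only point needing care — is the bookkeeping at the boundary of the bracket relation: the identity $[E_{ij},E_{jl}] = E_{il}$ requires $i \neq l$, so when $i = l$ one instead gets $[E_{ij}, E_{ji}] = 0$ (consistent with $E_{ji} = -E_{ij}$), and one must make sure the graph-theoretic induction never illegitimately invokes the bracket in a case where the shared-index hypothesis fails or where two of the three indices coincide. Organizing the argument purely in terms of the graph $\Gamma$ and the statement "path of length $2$ $\Rightarrow$ edge" handles this cleanly, since that statement is exactly the content of the bracket relation in its valid range. Once the components are shown to be complete graphs, identifying the resulting subalgebra with $\bigoplus_\ell \mathfrak{so}(k_\ell)$ in block embedding and passing to the group level are routine.
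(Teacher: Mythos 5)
Your proof is correct and follows essentially the same strategy as the paper's: decompose $S$ by disjoint index sets (your connected components of $\Gamma$ are precisely the paper's minimal sets $S_\alpha$), and then use Lie closure together with linear independence of the $E_{ij}$'s to force each piece to be a full block $\mathfrak{so}(k)$. Your graph-theoretic phrasing, in which transitivity of adjacency makes each component a complete graph, is a clean repackaging of the paper's inductive build-up of $\mathfrak{so}(3)\subset\mathfrak{so}(4)\subset\cdots$ within each $V_\alpha$.
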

\begin{proof}
 Write the set $S$ as the union of a finite number of sets $S_1$, $S_2$, $\cdots$, $S_m$ such that
 \begin{enumerate}
  \item $\{i, j\} \cap \{k, l\} = \emptyset$ whenever $E_{ij} \in S_\alpha$ and $E_{kl} \in S_\beta$ with $\alpha \neq \beta$.
  \item Each $S_\alpha$ is minimal (among subsets of $S$) with respect to property (1).
 \end{enumerate}
 Let $V_\alpha$ be the vector space generated by $S_\alpha$. Then $\k = \oplus_{\alpha} V_\alpha$ as a vector space. Further, each $V_\alpha$ is in fact a Lie subalgebra of $\k$. We can see this easily from the brackets among the $E_{ij}$'s; the brackets among basis elements in $V_\alpha$ cannot yield any indices that do not occur in $S_\alpha$, so $V_\alpha$ is closed under Lie brackets. Additionally, by (2) the brackets between $V_\alpha$ and $V_\beta$ are zero when $\alpha\neq \beta$. Thus it only remains to show that $V_\alpha \cong \mathfrak{so}(k_\alpha)$ in some block embedding. In fact, if $\Lambda = \{i_1, \cdots, i_l\}$ is the set of indices appearing in $S_\alpha$ then we will show that $V_\alpha$ is the $\mathfrak{so}(l)$ in the block embedding corresponding to the indices $i_1, \cdots, i_l$.
 
 If $S_\alpha$ has just one element $E_{ij}$ then clearly $V_\alpha \cong \R\cdot E_{ij} \cong \mathfrak{so}(1)$. If $S_\alpha$ has additional elements then without loss of generality there exists an index $k\neq i, j$ such that $E_{ik} \in S_\alpha$, since otherwise we could have split off $\{E_{ij}\}$ as another $S_i$, thus contradicting minimality of $S_\alpha$. Then $[E_{ij}, E_{ik}] = -E_{jk}$, so since $V_\alpha$ is closed under Lie brackets, $E_{jk}\in S_\alpha$. Hence $V_\alpha \supseteq \mathfrak{so}(3) = \vspan\{ E_{ij}, E_{jk}, E_{ik} \}$. Now if $S_\alpha$ has only three elements then $V_\alpha \cong \mathfrak{so}(3) \subseteq \mathfrak{so}(n)$ in block embedding in the indices $i, j, k$ and we are done. If $S_\alpha$ has additional elements then by minimality of $S_\alpha$, there exists an index $l\neq i, j, k$ such that $E_{il} \in S_\alpha$. Then $[E_{ij}, E_{il}] = -E_{jl}$, $[E_{jk}, E_{jl}] = -E_{kl}$ so since $V_\alpha$ is closed under brackets, $E_{il}, E_{jl}, E_{kl} \in S_\alpha$. Hence $V_\alpha \supseteq \mathfrak{so}(4) = \vspan\{ E_{ij}, E_{jk}, E_{ik}, E_{il}, E_{jl}, E_{kl} \}$. As before, if $S_\alpha$ has no more elements then we are done as $V_\alpha \cong \mathfrak{so}(4) \subseteq \mathfrak{so}(n)$ in block embedding in the indices $i, j, k, l$. Continuing this process, we see that
 \begin{enumerate}
  \item At each stage we obtain $V_\alpha \supset \mathfrak{so}(k)$ (in block embedding) for some $k$.
  \item This process must terminate since $V_\alpha$ is finite dimensional.
 \end{enumerate}
 Hence the termination of this process yields $V_\alpha \cong \mathfrak{so}(k)$ in some block embedding.
\end{proof}
%%%
%%%
\section{Instantaneous behaviour of Grove-Ziller metrics under Ricci flow}\label{sec:appln}
In this section, we use the result of the previous section to study the Ricci flow behaviour of certain cohomogeneity one $\sec\geq 0$ metrics. In particular, we prove Theorem \ref{mainthm:sec_GZ} from the Introduction, which extends the techniques of \cite{bk16} to higher dimensional cohomogeneity one manifolds. First, we derive the Ricci flow equations for a diagonal cohomogeneity one metric, assuming it evolves through other diagonal metrics. In the expressions below, $b_i = K(e_i, e_i)$, where $K(\cdot, \cdot)$ is the Killing form of $\gg$, and $e_i\in\n_i$. Also,  $m$ denotes the dimension of $\h^\perp$.
\begin{proposition}\label{propn:RFeqns}
 Let $\g(t)$ be a time-dependent diagonal cohomogeneity one metric evolving by the Ricci flow. Then the components $h, f_1, \cdots, f_m$ of $\g(t)$ satisfy the following system of PDEs:
\begin{equation}\label{eq:RFcohom1}
 \begin{aligned}
  &h_t = \sum_{j=1}^m \left(\frac{{f_j}_{rr}}{hf_j} - \frac{{f_j}_rh_r}{h^2f_j} \right)\\
  &{f_i}_t = \frac{{f_i}_{rr}}{h^2} - \frac{{f_i}_rh_r}{h^3} + \frac{{f_i}_r}{h}\sum_{j=1}^m \frac{{f_j}_r}{hf_j} - \frac{{f_i}_r^2}{h^2f_i} - \sum_{j,k=1}^m\frac{f_i^4-2f_k^4}{4f_if_j^2f_k^2}{\gamma_{jk}^i}^2 + \frac{b_i}{2f_i} \\
  &t\in (0,T),\, r\in(0,L),\, i = 1,\cdots m
 \end{aligned} 
\end{equation}
\end{proposition}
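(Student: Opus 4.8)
The plan is to read off the diagonal components of $\Ric_\g$ for a diagonal cohomogeneity one metric $\g = h^2\,\dd r^2 + \sum_i f_i^2\,\omega_i^2$ and substitute them into the Ricci flow equation $\partial_t\g = -2\Ric_\g$. Since $\g(t)$ is assumed to stay diagonal in $\B'$, only the components $\g(\ddr,\ddr) = h^2$ and $\g(e_i,e_i) = f_i^2$ (recall $Q(e_i,e_i) = 1$) evolve, so the flow is equivalent to the scalar relations $\partial_t(h^2) = -2\Ric_\g(\ddr,\ddr)$ and $\partial_t(f_i^2) = -2\Ric_\g(e_i,e_i)$; dividing by $2h$ and by $2f_i$ respectively produces the left-hand sides $h_t$ and ${f_i}_t$ of \eqref{eq:RFcohom1}. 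The mixed components $\Ric_\g(\ddr,e_i)$ and $\Ric_\g(e_i,e_j)$ for $i\neq j$ vanish by Proposition \ref{propn:ric_homog} together with the diagonality hypothesis, so they impose no further content on the system.

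For the radial term I would pass to the arclength parameter $s$ defined by $\dd s = h\,\dd r$, in which $\g = \dd s^2 + \sum_i f_i^2\,\omega_i^2$ is a genuine (multiply) warped metric over the orbit space and $\Ric_\g(\partial_s,\partial_s) = -\sum_{j=1}^m \ddot f_j/f_j$, a dot denoting $\dd/\dd s$ (the mean-curvature cross terms cancel as in the warped-product computation). Converting back via $\dot f_j = f_{jr}/h$ and $\ddot f_j = f_{jrr}/h^2 - f_{jr}h_r/h^3$, and using $\Ric_\g(\ddr,\ddr) = h^2\,\Ric_\g(\partial_s,\partial_s)$, the relation $2h\,h_t = -2\Ric_\g(\ddr,\ddr)$ collapses to the first equation of \eqref{eq:RFcohom1}. (Alternatively, one can quote the radial Ricci formula directly from \cite[Proposition 1.14]{gz02}.)

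For the tangential term I would split $\Ric_\g(e_i,e_i)$, following Proposition \ref{propn:ric_homog} and \cite[Proposition 1.14]{gz02}, into the intrinsic Ricci curvature $\Ric^{\G/\H}_{\g_r}(e_i,e_i)$ of the principal orbit and the extrinsic (second fundamental form) contribution. Rewritten in the $r$-variable as above and divided by $f_i$, the extrinsic part gives $\frac{f_{irr}}{h^2} - \frac{f_{ir}h_r}{h^3} + \frac{f_{ir}}{h}\sum_j\frac{f_{jr}}{hf_j} - \frac{f_{ir}^2}{h^2f_i}$. The intrinsic part is where the actual computation lies: starting from the standard formula for the Ricci curvature of an $\Ad(\H)$-invariant metric $\g_r = \sum_i f_i^2\,Q|_{\n_i}$ in a $Q$-orthonormal adapted basis (the diagonal counterpart of \eqref{eqn:ric_homog_biinv}), one uses the skew-symmetry relations $\gamma_{ij}^k = -\gamma_{ik}^j = -\gamma_{ji}^k$, Schur's lemma, and the standard identities relating $b_i = K(e_i,e_i)$ to the structure constants $\gamma$ in order to collect everything, after dividing by $f_i$, into the closed form $-\sum_{j,k}\frac{f_i^4-2f_k^4}{4f_if_j^2f_k^2}(\gamma_{jk}^i)^2 + \frac{b_i}{2f_i}$. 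Adding the two contributions gives the second equation of \eqref{eq:RFcohom1}.

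The step I expect to be the main obstacle is this last simplification: matching the homogeneous Ricci formula — which is customarily written as a sum of several separately-grouped quadratic terms in the structure constants — against the single expression appearing in \eqref{eq:RFcohom1}, while simultaneously bookkeeping (i) the powers of $h$ introduced both by the non-unit normal $h\,\dd r$ and by differentiating $f_i^2$ rather than $f_i$, and (ii) the overall factor $-2$ from the Ricci flow equation. A good consistency check along the way is the biinvariant case $h\equiv 1$, $f_i\equiv 1$, where both equations must reduce to $\Ric = -\tfrac14 K$; everything else is a routine assembly of the earlier results of this paper with standard warped-product and homogeneous-space curvature formulas.
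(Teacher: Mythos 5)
Your proposal is correct and follows essentially the same route as the paper: read off the diagonal Ricci components from \cite[Proposition 1.14]{gz02}, substitute into $\partial_t\g=-2\Ric_\g$, and divide by $2h$ (resp.\ $2f_i$). The ``main obstacle'' you flag is in fact moot, because \cite[Proposition 1.14]{gz02} already gives $\Ric(e_i,e_i)$ in precisely the form $-\tfrac{b_i}{2}+\sum_{j,k}\tfrac{f_i^4-2f_k^4}{4f_j^2f_k^2}(\gamma_{jk}^i)^2+\{\cdot\}f_i^2$, so no further regrouping of structure-constant terms is needed --- the paper's proof simply quotes it and compares coefficients, which is also the fallback you explicitly allow for yourself.
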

\begin{proof}
A time-dependent diagonal metric $\g$ and diagonal Ricci tensor can be written as:
\begin{align*}
 \g(r,t) &= h(r,t)^2\,\dd r^2 + \sum_{i=1}^m f_i(r,t)^2\, \omega_i^2\\
 \Ric_\g(r,t) &= \Ric_\g\left(\ddr, \ddr\right)\,\dd r^2 + \sum_{i=1}^m \Ric_\g(X_i^*, X_i^*)\,\omega_i^2
\end{align*}
Differentiating the metric term by term with respect to $t$ yields
\begin{align*}
 \frac{\dd\g}{\dd t} = 2hh_t\,\dd r^2 + \sum_{i=1}^m 2f_i{f_i}_t\, \omega_i^2
\end{align*}
On the other hand, by \cite[Proposition 1.14]{gz02} the Ricci tensor can be written in terms of the metric and the structure constants $\gamma_{ij}^k$ as
\begin{align}
 \Ric(e_i, e_i) &=  -\frac{b_i}{2} + \sum_{j,k=1}^m\frac{f_i^4-2f_k^4}{4f_j^2f_k^2}{\gamma_{jk}^i}^2 + \left\{ -\frac{{f_i}_r}{hf_i}\sum_{j=1}^m \frac{{f_j}_r}{hf_j} + \frac{{f_i}_r^2}{h^2f_i^2} - \frac{{f_i}_{rr}}{h^2f_i} + \frac{{f_i}_rh_r}{h^3f_i} \right\}f_i^2
\end{align}
Substituting these in the Ricci flow equation \eqref{eq:RF} and comparing coefficients then yields the result.
\end{proof}
We now use this system of PDEs to study the Ricci flow behavior of sectional curvature on a special class of cohomogeneity one manifolds.
\begin{theorem}\label{thm:sec_GZ}
 Let $M$ be a cohomogeneity one manifold with the action of $\SO(n)$ with a group diagram where the groups $\H$, $\K_\pm$ are products of $\SO(k)$ in  block embedding, and such that there are two singular orbits each of codimension two. Then $M$ admits a metric $\g$ such that $\sec_\g \geq 0$ and when evolved by the Ricci flow, $\g$ immediately acquires some negatively curved $2$-planes.
\end{theorem}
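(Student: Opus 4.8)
The plan is to obtain $\g_0$ from the Grove--Ziller construction, to invoke Theorem \ref{mainthm:RF_SO} so that the flow stays diagonal and is therefore governed by the system \eqref{eq:RFcohom1}, and then to Taylor expand in $t$ the sectional curvature of a carefully chosen $2$-plane that is flat for $\g_0$, showing it becomes strictly negative for all small $t>0$. Since both singular orbits have codimension two, $\K_\pm/\H\cong S^1$, so Grove--Ziller \cite{gz00} produces a $\G$-invariant metric with $\sec\geq 0$. As $\H$ and $\K_\pm$ are products of block $\SO(k)$'s, the subalgebras $\h\subset\k_\pm\subset\gg$ are spanned by subsets of $\{E_{ij}\}$, so $\h^\perp$ has a $Q$-orthonormal basis $\B$ of $E_{ij}$'s adapted to $\h^\perp=\m_\pm\oplus\p_\pm$, and one may take the Grove--Ziller metric diagonal with respect to $\B'=\B\cup\{\ddr\}$: it has $h\equiv 1$, and $f_j\equiv 1$ along $\gamma$ for every $j$ except the index $p$ with $\n_p=\p_-$ (and the analogous index near $r=L$), with $f_p(r)=\psi_-(r)$ where $\psi_-$ is smooth, $\psi_-(0)=0$, $\psi_-'(0)>0$, and $\psi_-\equiv 1$ away from $r=0$.

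Next I would isolate a flat $2$-plane for $\g_0$ that does not lie in a product region. Because $\p_-$ is one-dimensional and $\K_-\subsetneq\G$, one has $[\,\p_-,\m_-\,]\neq 0$: writing $\p_-=\langle E_{ab}\rangle$, if the centralizer of $E_{ab}$ contained $\m_-$ then $\k_-$ would exhaust $\gg$. Hence there is a basis vector $e_i=E_{cd}\in\m_-$ with $[e_i,e_p]\neq 0$, where $e_p$ spans $\p_-$; let $\n_i\subseteq\m_-$ be the $\H$-module containing $e_i$, so $f_i\equiv 1$ along $\gamma$ near $r=0$. By the standard curvature formula $\sec_\g(\ddr,X_i^*)=-\frac{{f_i}_{rr}}{h^2f_i}+\frac{h_r\,{f_i}_r}{h^3f_i}$ for a diagonal cohomogeneity one metric, the plane $\sigma_r=\vspan\{\ddr|_{\gamma(r)},\,X_i^*|_{\gamma(r)}\}$ is flat for $\g_0$ for every small $r>0$, even though $\g_0$ is not a Riemannian product near $\gamma(r)$ since $f_p=\psi_-(r)$ is non-constant there.

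Now I would run the flow. By Theorem \ref{mainthm:RF_SO} the evolving metric stays diagonal with respect to $\B'$, hence its components are smooth in $(r,t)$ on the principal part and solve \eqref{eq:RFcohom1}. At $t=0$ and small $r$ all the $r$-derivative terms of the ${f_i}_t$-equation vanish; substituting the Grove--Ziller values and using $\gamma_{jk}^i=-\gamma_{ik}^j$, the bracket sum collapses and one finds
\[
  {f_i}_t(r,0)\;=\;\frac{b_i}{2}+\frac14\sum_{j,k\neq p}(\gamma_{jk}^i)^2+\frac{a}{2}\,\psi_-(r)^2\;=:\;\phi(r),\qquad a:=\big|[e_i,e_p]\big|_Q^2>0 .
\]
Differentiating twice in $r$ gives $\phi''(r)=a\big((\psi_-')^2+\psi_-\psi_-''\big)\to a\,\psi_-'(0)^2>0$ as $r\to 0^+$, so fix $r_*>0$ with $\phi''(r_*)>0$. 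Since $f_i(\cdot,0)\equiv 1$ near $r_*$, we get ${f_i}_{rr}(r_*,t)=t\,\phi''(r_*)+O(t^2)$, while $h,f_i=1+O(t)$ and ${f_i}_r,h_r=O(t)$ at $(r_*,t)$; plugging into the curvature formula yields
\[
  \sec_{\g(t)}\big(\ddr,\,X_i^*\big)\big|_{\gamma(r_*)}\;=\;-\,t\,\phi''(r_*)+O(t^2)\;<\;0
\]
for all sufficiently small $t>0$. Thus $\g(t)$ carries negatively curved $2$-planes for every small $t>0$, which proves the theorem.

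The main obstacle is the computation of $\phi$: one must verify that the collapsing variable enters ${f_i}_t$ through the single term $\tfrac{a}{2}\psi_-^2$ with positive coefficient — this rests on a cancellation in the bracket sum of \eqref{eq:RFcohom1} — and that $a>0$, i.e. that the isotropy action of $\p_-$ on $\m_-$ is nontrivial, which I resolved above for the diagrams in question. The remaining points — the precise form of the Grove--Ziller metric and the joint short-time smoothness of the Ricci flow on the compact manifold $M$, used to justify the expansion away from the singular orbits — are routine.
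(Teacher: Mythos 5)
Your argument is correct and follows essentially the same path as the paper's: take the Grove--Ziller metric in the $\{E_{ij}\}$-adapted diagonal frame, use Theorem~\ref{mainthm:RF_SO} to stay within diagonal metrics and hence within the system~\eqref{eq:RFcohom1}, note that near the singular orbit the only flat radial plane $\vspan\{\ddr, X_i^*\}$ has curvature $-\,{f_i}_{rr}/(h^2 f_i)+h_r {f_i}_r/(h^3 f_i)$, compute $({f_i})_t$ at $t=0$ to find the only $r$-dependent term is the $\psi_-^2$ (the paper's $f_1^2$) term, and then differentiate twice in $r$ and invoke the smoothness conditions $\psi_-(0)=0$, $\psi_-'(0)>0$ to get strict positivity of $({f_i})_{trr}$ for small $r>0$. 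The cancellation in the bracket sum that you flag as the main obstacle is exactly the computation the paper carries out, and your final formula $({f_i})_t\big|_{t=0}=\tfrac{b_i}{2}+\tfrac14\sum_{j,k\neq p}(\gamma_{jk}^i)^2+\tfrac{a}{2}\psi_-^2$ with $a=|[e_i,e_p]|_Q^2$ agrees with the paper's (with $c=1$ and niceness collapsing the sum over $k$).

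The one genuine addition you make, and it is worth keeping, is the verification that $a>0$ for a suitable choice of $e_i$: that is, that there is a basis vector $e_i$ with $[e_i,e_p]_{\n}\neq 0$. The paper asserts ``without loss of generality $3$ is the unique index $j$ with $\gamma_{1j}^2\neq 0$,'' which presupposes that such an index exists, and only sketches uniqueness (from niceness) but not existence. Your argument for existence is the right idea but is phrased a bit loosely: it is not immediate that ``$\m_-\subset$ centralizer of $E_{ab}$'' forces ``$\k_-=\gg$.'' A cleaner version: since $\H$ is connected and $\dim\p_-=1$, $\Ad_\H$ is trivial on $\p_-$, so $[\h,E_{ab}]=0$; if also $[\m_-,E_{ab}]=0$, then $[\gg,E_{ab}]=[\h\oplus\p_-\oplus\m_-,E_{ab}]=0$ makes $E_{ab}$ central in $\mathfrak{so}(n)$, impossible for $n\ge 3$. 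Moreover, skew-symmetry of $\ad$ together with $[\h,E_{ab}]=0$ shows $[E_{ab},\m_-]\perp\h$, so $[e_i,e_p]_\n=[e_i,e_p]$ and $a$ really is $|[e_i,e_p]|_Q^2$. With that small tightening, your proof is complete and matches the paper's proof in substance.
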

\begin{proof}
 Since the cohomogeneity one manifold ($M$, $\G$) has codimension $2$ singular orbits, by work of Grove and Ziller \cite{gz00}, $M$ admits a $\G$-invariant metric $\ggz$ with $\sec \geq 0$. By the construction in \cite{gz00}, one can arrange that the metric is diagonal in a basis coming from the standard basis vectors of $\mathfrak{so}(n)$. By Theorem \ref{mainthm:RF_SO}, the evolving metric $\g(t)$ will be diagonal in the same basis, and hence the components of the metric will satisfy \eqref{eq:RFcohom1}.
 
 By the Grove-Ziller construction, up to relabelling of indices, the functions $f_i$ that determine the metric have qualitative behaviour as follows. At a singular orbit, i.e. $r=0$, $f_1$ vanishes, and the remaining functions $f_i$ are equal and constant in a neighborhood of $r=0$. As a consequence, if we define $\mu(r)$ to be the $2$-plane spanned by $\ddr$ and $X_2$ then $\sec_{\ggz}\mu(r) = -\frac{f_2''}{f_2} = 0$ for $r$ close to $0$. Here $'$ denotes derivative with respect to arclength along $\gamma(r)$. We will compute the first variation of $\sec_{\g(t)}\mu(r)$ at $t=0$.
 Using the assumptions about the $f_i$'s in a neighborhood of $r=0$ for the metric $\g_{GZ}$,
 \begin{align*}
  \frac{\dd}{\dd t}\left(\frac{f_2''}{f_2}\right) = -\frac{({f_2})_{rrt}}{f_2}\big|_{t=0}
 \end{align*}
 By regularity of $f_2$, $({f_2})_{rrt} = (({f_2})_t)_{rr}$ which we can compute by twice differentiating with respect to $r$ the equation in \ref{eq:RFcohom1} corresponding to $f_2$. We compute this derivative at $t=0$ (i.e. for the metric $\ggz$) and for $r>0$ close to $0$. As a result, $f_i = c$ for $i>1$, for some constant $c$. Hence $({f_i})_r = ({f_i})_{rr} = 0$ for each $i>1$, so the expression for $({f_2})_t$ in a neighborhood of $r=0$ reduces to
 \begin{align*}
  ({f_2})_t\big|_{t=0} &= \frac{b_2}{2f_2} - \sum_{j,k=1}^m \frac{f_2^4 - 2f_k^4}{4f_2f_j^2f_k^2}(\gamma_{jk}^2)^2\\
  &= \frac{b_2}{2f_2} - \frac{f_2^4 - 2f_1^4}{4f_2f_3^2f_1^2}(\gamma_{31}^2)^2 - \frac{f_2^4 - 2f_3^4}{4f_2f_1^2f_3^2}(\gamma_{13}^2)^2 - \sum_{j,k \neq 1} \frac{f_2^4 - 2f_k^4}{4f_2f_j^2f_k^2}(\gamma_{jk}^2)^2
 \end{align*}
Without loss of generality we have assumed that $3$ is the unique index $j$ such that $\gamma_{1j}^2 = \gamma_{j1}^2$ is non-zero. That there is only one such index follows from the fact that $\{E_{ij}\}$ is a nice basis. (If the basis is not nice then there will be additional summands of the same form as the second and third summand in the above expression, with $3$ replaced by the suitable index $j$ for which $\gamma_{1j}^2 = -\gamma_{j1}^2$ is non-zero.) Therefore,
\begin{align*}
 ({f_2})_t\big|_{t=0} &= \frac{b_2}{2f_2} - \frac{2(f_2^4 - f_3^4) - 2f_1^4}{4f_2f_3^2f_1^2}(\gamma_{13}^2)^2 - \sum_{j,k \neq 1} \frac{f_2^4 - 2f_k^4}{4f_2f_j^2f_k^2}(\gamma_{jk}^2)^2\\
 &= \frac{b_2}{2c} + \frac{f_1^2}{2c^3}(\gamma_{13}^2)^2 + \sum_{j,k \neq 1} \frac{1}{4c}(\gamma_{jk}^2)^2\\
 \implies ({f_2})_{trr}\big|_{t=0} &= \frac{(\gamma_{13}^2)^2}{c^3}\cdot(({f_1})_r^2 + f_1({f_1})_{rr}) 
\end{align*}
By the smoothness conditions at a singular orbit, $f_1(r=0) = ({f_1})_{rr}(r=0) = 0$ and $({f_1})_r(r=0) = a$ for some $a \in \Z_+$. Therefore for small enough $r>0$, $({f_1})_r^2 + f_1({f_1})_{rr}>0$, hence $({f_2})_{trr}>0$ and $\frac{\dd}{\dd t}\sec(\mu(r))\big|_{t=0}<0$. We conclude that for small enough $t>0$, $\sec_{\g(t)}(\mu(r))<0$.
\end{proof}
%%%
%%%

\end{document}